\documentclass[10pt]{amsart}
\usepackage{amssymb} 

\vfuzz2pt 
\hfuzz2pt 
\newtheorem{thm}{Theorem}[section]
\newtheorem{cor}[thm]{Corollary}
\newtheorem{lem}[thm]{Lemma}
\newtheorem{prop}[thm]{Proposition}
\theoremstyle{definition}

\newtheorem{rem}[thm]{Remark}
\newtheorem{con}[thm]{Conjecture}
\newtheorem{qu}[thm]{Question}
\numberwithin{equation}{section}

\begin{document}
\title[finite $p$-groups]{Powerful $p$-groups have  noninner
automorphisms of order $p$ and some cohomology}%
\author[Alireza Abdollahi]{Alireza Abdollahi}%
\address{Department of Mathematics, University of Isfahan, Isfahan 81746-73441, Iran; and  School of Mathematics, Institute for Research in Fundamental Sciences (IPM), P.O.Box: 19395-5746, Tehran, Iran. }%
\email{a.abdollahi@math.ui.ac.ir}%
\thanks{This research was in part supported by a grant from IPM (No. 87200118).}
\subjclass[2000]{20D45;20E36}
\keywords{Automorphisms of $p$-groups; finite $p$-groups; noninner automorphisms; powerful $p$-groups; $p$-central groups}%
\begin{abstract}
In this paper we study the longstanding conjecture of whether
there exists a noninner automorphism of order $p$ for a finite
non-abelian  $p$-group. We prove that if $G$
is a finite non-abelian $p$-group such that $G/Z(G)$ is
powerful then $G$ has a noninner automorphism of order $p$ leaving either $\Phi(G)$ or $\Omega_1(Z(G))$ elementwise fixed.
We also recall a connection between the conjecture and a cohomological problem and  we give an alternative  proof of the latter result for odd $p$, by showing that  the Tate cohomology
$H^n(G/N,Z(N))\not=0$  for all $n\geq 0$, where $G$ is a finite
$p$-group, $p$ is odd,  $G/Z(G)$ is $p$-central (i.e., elements
of order $p$ are central) and $N\lhd G$ with $G/N$  non-cyclic.
\end{abstract}
\maketitle
\section{\bf Introduction and Results}
Let $p$ be a prime number and $G$ be a non-abelian finite
$p$-group. A longstanding conjecture asserts that  $G$ admits a
noninner automorphism of order $p$ (see also Problem 4.13 of
\cite{Kbook}). By a famous result of W. Gasch\"utz \cite{G},
noninner automorphisms of $G$ of $p$-power order exist. M.
Deaconescu and G. Silberberg \cite{DS} reduced the verification
of the conjecture to the case in which $C_G(Z(\Phi(G)))=\Phi(G)$.
H. Liebeck \cite{L} has shown that finite $p$-groups of class 2
with $p>2$ must have a noninner automorphism of order $p$ fixing
the Frattini subgroup elementwise. In \cite{A} we showed the
validity of the conjecture when $G$ is   nilpotent of class 2.
In fact we proved that for any prime number $p$, every finite
non-abelian $p$-group  $G$ of class 2 has a noninner automorphism
of order $p$ leaving either the Frattini subgroup $\Phi(G)$ or
$\Omega_1(Z(G))$ elementwise fixed.\\
In Section 2 of this paper, we give some classes of $p$-groups $G$ for which the conjecture holds. We prove the validity of the conjecture whenever
 $G/Z(G)$ is  powerful, or of coclass $1$ or  $G$ is
$2$-generated and nilpotent of class $2$ with non-cyclic center (see Theorems \ref{l11}
and \ref{power1} and Corollary \ref{cor:coclass1}, below).\\
  Therefore, on the negative side, if the conjecture had a counter-example, it would not be a $p$-group of  types above.  On the positive side, we may mention nothing expect of only stating our intuition which seem very optimistic: from a result of Mann and Lubotzky \cite{LM} one knows that  any finite $p$-group $G$ has a characteristic powerful subgroup $M$ whose index is bounded by a function of the rank of $G$ and $p$; so maybe, one can prove the validity of the conjecture by an appropriate   induction argument on the rank of $G$  and/or finding a way to lift a noninner automorphism of order $p$ of $M$ to one of $G$.

The proof of Gasch\"utz's result \cite{G}  relies   on a cohomological property of finite $p$-groups. This may suggest that the cohomological tools may be still useful to attack on the conjecture. On the other hand, by using Deaconescu and Silberberg's result and a cohomological property of regular $p$-groups proved by P. Schmid \cite{S}, the validity of the conjecture is shown for regular $p$-groups. The question of which other classes of finite $p$-groups have the same cohomological property not only has its own interest and is asked in \cite{S} but also  having proved the cohomological property like regular $p$-groups, it   may be hoped (by the following means) to prove  the conjecture. So we  are also motivated  to study the latter question in Section 3.  We  explain  the cohomological property of regular $p$-groups and its connection with the conjecture.\\

 We first recall
some definitions and results concerning Tate cohomology of groups.
Let $Q$ and $A$ be finite groups where $A$ is abelian. If $Q$ acts
on $A$ (from the right) as a group, then $A$ can be viewed as a
(right) $Q$-module. We denote by $A_Q$ the submodule $\{a\in A
\;|\; a^x=a \;\text{for all}\; x\in Q\}$  of fixed points under
$Q$. The trace map $\displaystyle a\mapsto a\sum_{x\in Q} x$ of
$A$ is written $\tau=\tau_{Q}$, and its image will be denoted by
$A^\tau$. In dealing with Tate cohomology, by dimension-shifting
it is often enough to consider the situation in dimension $0$.
Recall that $H^0(Q,A)=A_Q/A^\tau$. If $Q$ and $A$ are $p$-groups,
by a theorem of Gasch\"utz and Uchida $A$ is cohomologically
trivial provided the Tate cohomology $H^n(Q,A)=0$ for just one
integer $n\geq 0$ (cf. \cite[p. 110]{Gru}). Let $G$ be a group
and  $N$ a normal subgroup of $G$. Then $G/N$ may act on $Z(N)$
as follows: $a^{gN}=a^g$ for all $a\in Z(N)$ and $g\in G$. Thus
$Z(N)$ is a $G/N$-module via this action. The group of all crossed
homomorphisms  of $G/N$ to $Z(N)$ is denoted by
$Z^1(G/N,Z(N))$ and $B^1(G/N,Z(N))$
 is the subgroup of all principal crossed homomorphisms. \\
In \cite{S}, P. Schmid proved that if $G$ is a regular $p$-group
and $N\lhd G$ such that $G/N$ is not cyclic then the Tate
cohomology $H^n(G/N,Z(N))\not=0$ for all $n$. He then conjectured that
\begin{con}
Let $G$ be a finite non-regular $p$-group. Then $H^n\big(\frac{G}{\Phi(G)},Z(\Phi(G))\big)\not=0$ for all integer $n$.
\end{con}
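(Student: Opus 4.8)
The plan is to reduce the all-degrees statement to a single degree and then to a purely local question about cyclic subgroups. First, by the theorem of Gasch\"utz and Uchida recalled above, for the $p$-groups $Q=G/\Phi(G)$ and the module $A=Z(\Phi(G))$ the module $A$ is cohomologically trivial as soon as $H^n(Q,A)=0$ for a \emph{single} integer $n$. Contrapositively, it is enough to produce one degree in which the cohomology fails to vanish; equivalently, it suffices to prove that $Z(\Phi(G))$ is \emph{not} a cohomologically trivial $G/\Phi(G)$-module. I would therefore work in the degree $n=0$, where the group is the explicit quotient $A_Q/A^\tau$ with $A_Q=Z(G)\cap\Phi(G)$ (the elements of $Z(\Phi(G))$ fixed by all of $G$ are exactly those central in $G$ and lying in $\Phi(G)$) and $A^\tau$ the image of the trace map.

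Second, I would localize the triviality question to subgroups of order $p$. Since a finite $p$-group module is cohomologically trivial if and only if its restriction to every subgroup of order $p$ is, $A$ fails to be cohomologically trivial as soon as there is some $\bar x=x\Phi(G)$ of order $p$ in $Q$ for which $A$ is not cohomologically trivial over $\langle\bar x\rangle$. Applying Gasch\"utz--Uchida once more, now to the cyclic group $\langle\bar x\rangle$, this is the same as $H^0(\langle\bar x\rangle,A)\neq 0$, that is
\[
C_{Z(\Phi(G))}(x)\;\neq\;N_x\big(Z(\Phi(G))\big),\qquad N_x=1+x+\cdots+x^{p-1}.
\]
Because restriction of a cohomologically trivial module is again cohomologically trivial, non-triviality over one such $\langle\bar x\rangle$ forces non-triviality over $Q$; conversely the detection theorem shows this reduction loses nothing, so the conjecture is in fact \emph{equivalent} to the existence, for every non-regular $G$, of one $x\in G\setminus\Phi(G)$ for which the norm $N_x$ misses some element of $C_{Z(\Phi(G))}(x)$.

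Third, the non-regularity hypothesis must be converted into the existence of such an $x$. On elements $a\in C_A(x)$ one has $N_x(a)=a^p$, so a natural candidate target is an element of order $p$ in $Z(G)\cap\Phi(G)$ (supplied by $\Omega_1(Z(G))$, which also drives Theorem~\ref{power1}); the goal is to show it cannot be written as $N_x(b)$ for any $b\in Z(\Phi(G))$. In a regular group Schmid could carry this out because the power maps $a\mapsto a^p$ and the subgroups $\Omega_i,\mho_i$ are tightly controlled by the regularity identity $(ab)^p\equiv a^p b^p$ modulo $p$-th powers of commutators, which makes the norm image computable; the present paper instead imposes $p$-centrality on $G/Z(G)$, again forcing the order-$p$ elements to behave centrally and rendering the degree-$0$ computation transparent for odd $p$.

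The main obstacle is exactly this last conversion in full generality. Non-regularity is witnessed by a pair $x,y$ with $(xy)^p\ne x^p y^p$ modulo $p$-th powers of commutators, but translating this global defect into non-surjectivity of a \emph{single} norm map $N_x$ on $Z(\Phi(G))$ is not automatic: without a regularity- or $p$-centrality-type hypothesis the action of $x$ on $Z(\Phi(G))$ and the $p$-th power map there need not interact in a controllable way, so that bounding $N_x\big(Z(\Phi(G))\big)$ and exhibiting a central order-$p$ element outside every such image seems to require new structural input (for instance reducing to a two-generated irregular section and analysing $Z(\Phi(G))$ as a module over the image of $\langle x\rangle$). I expect this step to be the crux, and it is precisely why the conjecture is here settled only under the $p$-central hypothesis on $G/Z(G)$ (for odd $p$) rather than for arbitrary non-regular $p$-groups.
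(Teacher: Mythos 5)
You should first be clear about the status of the statement: it is Schmid's conjecture, which this paper records as an \emph{open} conjecture and never proves. There is therefore no "paper's own proof" to match; the paper only establishes the partial results of Theorem \ref{p-coh} (non-vanishing of $H^n(G/N,Z(N))$ when $p>2$ and $G/Z(G)$ has class at most $2$ or is $p$-central, or when $G$ itself has class at most $2$), and these hypotheses neither imply nor are implied by non-regularity. Your proposal is likewise not a proof, and you concede as much. Your first two reductions are sound: by Gasch\"utz--Uchida, non-vanishing in all degrees is equivalent to $Z(\Phi(G))$ not being cohomologically trivial over $G/\Phi(G)$; and since restriction to a subgroup preserves cohomological triviality, it \emph{suffices} to exhibit one $\bar x$ of order $p$ in $G/\Phi(G)$ with $H^0(\langle\bar x\rangle,Z(\Phi(G)))\neq 0$, i.e.\ with $N_x\big(Z(\Phi(G))\big)\subsetneq C_{Z(\Phi(G))}(x)$. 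But this is exactly the standard machinery of Schmid that the paper itself reproduces (Proposition \ref{coh-tr} together with Lemmas \ref{p-central}--\ref{nil=2} in the proof of Theorem \ref{p-coh}); it contains no new idea. The entire content of the conjecture sits in your third step --- converting non-regularity of $G$, by itself, into the existence of such an $x$ --- and there you offer no argument at all, only the (accurate) diagnosis that it is "the crux" and would require "new structural input". That is a genuine gap; indeed, it \emph{is} the open problem, which is precisely why both Schmid and this paper retreat to regularity, class, or $p$-centrality hypotheses under which the norm map becomes computable (via the Hall--Petrescu identities of Lemmas \ref{p-central}--\ref{nil=2}).

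A secondary but real error: your claimed \emph{equivalence} rests on the assertion that cohomological triviality over a $p$-group is detected on subgroups of order $p$, and that assertion is false. Only the direction you actually use (restriction of a cohomologically trivial module is cohomologically trivial) holds. For a counterexample to detection, let $Q=\langle x\rangle\times\langle y\rangle\cong C_2\times C_2$ act on $A=\mathbb{F}_4 e_1\oplus\mathbb{F}_4 e_2$ (an abelian group of order $16$) by $e_1^x=e_1^y=e_1$, $e_2^x=e_2+e_1$, $e_2^y=e_2+\lambda e_1$ with $\lambda\in\mathbb{F}_4\setminus\mathbb{F}_2$. Then $A$ is free, hence cohomologically trivial, over each of the three subgroups of order $2$ (each of $x-1$, $y-1$, $xy-1$ has kernel and image both equal to $\mathbb{F}_4e_1$), yet $A^Q=\mathbb{F}_4e_1$ while $N_Q A=0$, so $H^0(Q,A)\neq 0$. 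Consequently your reduction to a single cyclic subgroup is only a sufficient criterion, not a reformulation of the conjecture: a proof is not forced to proceed through one norm map $N_x$, and for some non-regular $G$ it conceivably cannot.
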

The following question which naturally arises from the work of Schmid will be studied in Section 3.
\begin{qu}\label{qu:Schmid}
For which  finite $p$-groups $G$ and which normal subgroups $N$ of $G$ we have
$H^n\big(\frac{G}{N},Z(N)\big)\not=0$ for all integers $n$.
\end{qu}
 A relation between
 non-triviality of Tate cohomology $H^n(G/N,Z(N))$  and the
 existence of noninner automorphisms of order $p$ in
 $Aut(G)$ is  behind the using of  the following well-known result and its corollary.
 \begin{prop}{\rm(see e.g., \cite[Result 1.1]{S1})}\label{prop:schmid1}
 Suppose that $N$ is a normal subgroup of a group $G$. Then there is a natural isomorphism $\varphi: Z^1\big(\frac{G}{N}, Z(N)\big)\rightarrow C_{Aut(G)}(N;G/N)$ given by $g^{\varphi(f)}=g\big(gN\big)^f$ for $g\in G$, $f\in Z^1\big(\frac{G}{N}, Z(N)\big)$. The image of $B^1\big(\frac{G}{N}, Z(N)\big)$ under $\varphi$ is the group of inner automorphisms of $G$ induced by $Z(N)$.
 \end{prop}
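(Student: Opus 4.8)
The plan is to build the two-sided inverse of $\varphi$ explicitly and then verify that $\varphi$ and this inverse are mutually inverse group homomorphisms, leaving the statement about $B^1$ to the end. Throughout I read the target as the stability group $C_{Aut(G)}(N;G/N)=\{\alpha\in Aut(G) : \alpha(n)=n \text{ for } n\in N \text{ and } g^{-1}\alpha(g)\in N \text{ for all } g\in G\}$, i.e.\ the automorphisms fixing $N$ elementwise and inducing the identity on $G/N$. First I would check that for $f\in Z^1\big(\frac{G}{N},Z(N)\big)$ the map $\varphi(f)$ defined by $g^{\varphi(f)}=g\,(gN)^f$ is an endomorphism: expanding $(gh)^{\varphi(f)}$, substituting the cocycle identity $\big((gN)(hN)\big)^f=\big((gN)^f\big)^{hN}(hN)^f$ and using $(gN)^f\in Z(N)$ to push the conjugation through yields $g^{\varphi(f)}h^{\varphi(f)}$. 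Because any crossed homomorphism kills the trivial coset, $\varphi(f)$ fixes $N$ pointwise, and since $(gN)^f\in Z(N)\le N$ it is the identity on $G/N$; injectivity is immediate ($g\,(gN)^f=1$ forces $g\in N$, whence $g=g^{\varphi(f)}=1$) and surjectivity follows by checking that $h\big((hN)^f\big)^{-1}$ is a preimage of $h$, so indeed $\varphi(f)\in C_{Aut(G)}(N;G/N)$.

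For the inverse I would assign to $\alpha\in C_{Aut(G)}(N;G/N)$ the function $(gN)^{f_\alpha}=g^{-1}\alpha(g)$. \emph{This is the step I expect to be the crux:} one must show $g^{-1}\alpha(g)$ lies in $Z(N)$, not merely in $N$. For this I would apply $\alpha$ to the element $gng^{-1}\in N$, which $\alpha$ fixes, and compare with $\alpha(g)\,n\,\alpha(g)^{-1}$; the resulting identity $gng^{-1}=\alpha(g)n\alpha(g)^{-1}$ rearranges to $\big(g^{-1}\alpha(g)\big)^{-1}n\big(g^{-1}\alpha(g)\big)=n$ for every $n\in N$, forcing $g^{-1}\alpha(g)\in Z(N)$. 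With that in hand, the centrality of $g^{-1}\alpha(g)$ is exactly what makes $f_\alpha$ well defined on cosets (the ambiguity $g\mapsto gn$ is absorbed), and the cocycle identity for $f_\alpha$ is a short conjugation computation. The identities $g^{\varphi(f_\alpha)}=g\,g^{-1}\alpha(g)=\alpha(g)$ and $(gN)^{f_{\varphi(f)}}=(gN)^f$ then show that $\alpha\mapsto f_\alpha$ is a two-sided inverse of $\varphi$.

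Finally I would verify that $\varphi$ is multiplicative and identify the image of $B^1$. Composing $\varphi(f_1)$ after $\varphi(f_2)$ and using that $\varphi(f_2)$ fixes the element $(gN)^{f_1}\in Z(N)$ collapses the product to $g\,(gN)^{f_1}(gN)^{f_2}$, which equals $g\,(gN)^{f_1 f_2}$ since $Z(N)$ is abelian; hence $\varphi$ is an isomorphism of groups (and the target is in particular abelian). For the last assertion, a principal crossed homomorphism $(gN)^f=a^{-1}a^{g}$ with $a\in Z(N)$ gives $g^{\varphi(f)}=g\,a^{-1}a^{g}=aga^{-1}$, since the two central factors $a^{-1}$ and $a^{g}$ commute; this is conjugation by $a^{-1}$. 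Running this over all $a\in Z(N)$ shows $\varphi\big(B^1\big(\frac{G}{N},Z(N)\big)\big)$ is exactly the group of inner automorphisms of $G$ induced by $Z(N)$, which completes the proof.
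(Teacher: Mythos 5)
Your proof is correct, but there is nothing in the paper to compare it against: the paper does not prove this proposition at all, it simply quotes it from Schmid \cite[Result 1.1]{S1} and uses it as a black box. Your write-up therefore supplies a self-contained verification of the cited result, and it is sound throughout. You correctly isolate the one genuinely nontrivial point: for $\alpha\in C_{Aut(G)}(N;G/N)$ the displacement $g^{-1}g^{\alpha}$ lies in $Z(N)$ and not merely in $N$; applying $\alpha$ to $gng^{-1}$ and using that $\alpha$ fixes $N$ elementwise is exactly the right argument, and that centrality is then precisely what makes $f_{\alpha}$ constant on cosets and a cocycle. The rest is routine and you do it correctly: the cocycle identity (with the module action being conjugation in $G$) gives the endomorphism property; the explicit preimage $h\big((hN)^f\big)^{-1}$ gives surjectivity, so bijectivity needs no finiteness hypothesis --- appropriate, since the proposition is stated for an arbitrary group $G$; the two composites $\varphi(f_{\alpha})=\alpha$ and $f_{\varphi(f)}=f$ show $\varphi$ is bijective; and the computation $g\,a^{-1}a^{g}=aga^{-1}$ (the two factors commute, both lying in the abelian group $Z(N)$) identifies $\varphi\big(B^1(\frac{G}{N},Z(N))\big)$ with the inner automorphisms induced by $Z(N)$, since $a\mapsto a^{-1}$ permutes $Z(N)$. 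Two cosmetic remarks only: in the endomorphism check the centrality of $(gN)^f$ is not actually needed, since the conjugate $\big((gN)^f\big)^{h}$ cancels against $h$ directly; and the adjective ``natural'' in the statement is left unaddressed, though no reader would demand a functoriality check here.
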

 Here $C_{Aut(G)}(N; G/N)$ denotes all automorphisms $\alpha$ of $G$ such that $x^\alpha=x$ for all $x\in N$ and $g^{-1}g^\alpha\in N$ for all $g\in G$.
 \begin{prop}{\rm(Corollary 1.2 of \cite{S1})}\label{prop:Sch}
 Assume that $N$ is a normal subgroup of a group $G$ such that $C_G(N)=Z(N)$ and $H^1(G/N,Z(N))\not=0$. Then $C_{Aut(G)}(N; G/N)$ is not contained in $Inn(G)$
 \end{prop}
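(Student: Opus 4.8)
The plan is to deduce this directly from Proposition \ref{prop:schmid1} by pinning down, inside $C_{Aut(G)}(N;G/N)$, exactly which automorphisms are inner, and then using the hypothesis $H^1(G/N,Z(N))\neq 0$ to manufacture a noninner one. By Proposition \ref{prop:schmid1}, $\varphi$ is an isomorphism from $Z^1(G/N,Z(N))$ onto $C_{Aut(G)}(N;G/N)$ carrying the subgroup $B^1(G/N,Z(N))$ onto the group of inner automorphisms of $G$ induced by elements of $Z(N)$. Since $H^1(G/N,Z(N))=Z^1(G/N,Z(N))/B^1(G/N,Z(N))\neq 0$, the inclusion $B^1(G/N,Z(N))\subsetneq Z^1(G/N,Z(N))$ is proper, and hence $\varphi\big(B^1(G/N,Z(N))\big)$ is a proper subgroup of $C_{Aut(G)}(N;G/N)$. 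So I can fix an $\alpha\in C_{Aut(G)}(N;G/N)$ that is not an inner automorphism induced by an element of $Z(N)$.

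The crux is to upgrade ``$\alpha$ is not induced by an element of $Z(N)$'' to ``$\alpha\notin Inn(G)$''; this is where the hypothesis $C_G(N)=Z(N)$ is used. I would establish the identity
\[
Inn(G)\cap C_{Aut(G)}(N;G/N)=\varphi\big(B^1(G/N,Z(N))\big),
\]
that is, every inner automorphism lying in $C_{Aut(G)}(N;G/N)$ is already conjugation by an element of $Z(N)$. One inclusion is immediate: for $g\in Z(N)\subseteq N$, conjugation by $g$ fixes $N$ elementwise and, since $N\lhd G$, sends each $h\in G$ into the coset $hN$, so it belongs to $C_{Aut(G)}(N;G/N)$. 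For the reverse inclusion, suppose conjugation by some $g\in G$ lies in $C_{Aut(G)}(N;G/N)$. Then it fixes $N$ pointwise, which says precisely that $g$ centralises $N$, i.e. $g\in C_G(N)$; the hypothesis $C_G(N)=Z(N)$ then forces $g\in Z(N)$. Combined with the description of $\varphi\big(B^1(G/N,Z(N))\big)$ from Proposition \ref{prop:schmid1}, this yields the displayed equality.

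With this identity in hand the conclusion is immediate: the element $\alpha$ chosen above lies in $C_{Aut(G)}(N;G/N)$ but not in $\varphi\big(B^1(G/N,Z(N))\big)=Inn(G)\cap C_{Aut(G)}(N;G/N)$, so $\alpha\notin Inn(G)$. Hence $C_{Aut(G)}(N;G/N)$ is not contained in $Inn(G)$, as claimed. The only step needing genuine care is the reverse inclusion above, where the equality $C_G(N)=Z(N)$ is exactly what rules out inner automorphisms coming from elements of $G$ outside $Z(N)$; everything else is a formal consequence of the isomorphism furnished by Proposition \ref{prop:schmid1}.
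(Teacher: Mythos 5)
Your proof is correct. Note that the paper does not actually prove this proposition --- it is quoted as Corollary 1.2 of Schmid \cite{S1}, with Proposition \ref{prop:schmid1} (Schmid's Result 1.1) stated just beforehand as the underlying tool --- and your derivation is precisely the standard one that Proposition \ref{prop:schmid1} is designed to support. The crux, as you identify, is the equality $Inn(G)\cap C_{Aut(G)}(N;G/N)=\varphi\big(B^1(G/N,Z(N))\big)$: any inner automorphism lying in $C_{Aut(G)}(N;G/N)$ fixes $N$ pointwise, hence is conjugation by an element of $C_G(N)=Z(N)$, and this is exactly where the centralizer hypothesis enters; combined with the properness $B^1(G/N,Z(N))\subsetneq Z^1(G/N,Z(N))$ (equivalent to $H^1(G/N,Z(N))\neq 0$) and the fact that $\varphi$ is an isomorphism onto $C_{Aut(G)}(N;G/N)$, this immediately yields an automorphism in $C_{Aut(G)}(N;G/N)\setminus Inn(G)$.
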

   So, for applying Proposition \ref{prop:Sch},
 we need to have a normal  subgroup $N\lhd G$ such that
 $$(1) \;\;\; H^1(G/N,Z(N))\not=0 \;\;\;\;\;\text{and}\;\;\;\; (2) \;\;\; C_G(N)=Z(N).$$ By Deaconescu and Silberberg's result
 \cite{DS},  $\Phi(G)$  satisfies the  condition (2) and so we
 should verify (1). Of course non-triviality of
 $H^1(G/N,Z(N))$ is only a sufficient condition  to have a noninner
 $p$-automorphism (not necessarily of order $p$) and it is not sufficient for our purpose. Therefore according to  Propositions \ref{prop:schmid1} and \ref{prop:Sch}, the condition  $$(3) \;\;\;\;\;\; \Omega_1\big(Z^1(\frac{G}{N},Z(N))\big) \not\subseteq B^1(\frac{G}{N},Z(N))$$
 with together conditions (1) and (2)  are  sufficient to have a non-inner automorphism of order $p$ leaving both $N$ and $G/N$ elementwise fixed.  A  condition
which implies $(3)$ is the being elementary abelian of
$Z^1(G/N,Z(N))$. This is  proved for regular $p$-groups $G$ in \cite{S} whenever $N=\Phi(G)$.\\

In Section 3, we give  classes of $p$-groups  satisfying Schmid's
cohomological conclusion requested in Question \ref{qu:Schmid}. In particular we prove
$p$-groups  of class $2$ and for $p$-groups of class $3$ whenever
$p>2$ satisfy this cohomological property (see Theorem \ref{p-coh}, below). By using this result, we give an alternative proof
for the validity of  the conjecture for  $p$-groups ($p$ odd) with a powerful central factor.\\

Throughout $p$ always denotes a prime number. For a finite group
$G$, we denote by $d(G)$, $Z(G)$, $G'$, $\Phi(G)$, $Aut(G)$ and
$Inn(G)$, the minimum number of generators, the center, the
derived subgroup, the Frattini subgroup, the automorphism group,
the inner automorphism group of $G$, respectively. If $G$ is a
$p$-group, $\Omega_1(G)$ denotes the subgroup generated by
elements of order $p$. For two groups $G$ and $H$, $Hom(G,H)$
denotes the set of group homomorphisms from $G$ to $H$. If $H$ is
abelian $Hom(G,H)$ has a group structure with pointwise
multiplication. The unexplained notation is standard and follows
that of Gorenstein \cite{Gor}.
\section{\bf Finite $p$-groups without noninner automorphism of order $p$ and the existence of noninner automorphism of order $p$ in powerful $p$-groups}
Let $G$ be a group and $A$ be a normal abelian subgroup of $G$.
Then it is easy to see that  the set $[A,x]=\{[a,x] \;|\; a\in
A\}$ is a subgroup of $A$ for any element $x\in G$.  Lemmas \ref{1}, \ref{l1}  and  Corollary \ref{cor1} may be well-known, but we could not find them  as the following forms   in the published literatures.
\begin{lem}\label{1}
Let $G$ be a finite $p$-group such that  $G$ has no noninner
automorphism of order $p$ leaving  $\Phi(G)$ elementwise fixed.
Then $\Omega_1(Z(G))\leq [Z(M),g]$ for every maximal subgroup $M$
and every $g\in G\backslash M$. In particular $\Omega_1(Z(G))\leq
G'$.
\end{lem}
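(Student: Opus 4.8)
The plan is to fix a maximal subgroup $M$, an element $g \in G\setminus M$, and a nontrivial $z \in \Omega_1(Z(G))$, and to build from $z$ an automorphism of order $p$ fixing $\Phi(G)$ elementwise; the hypothesis then forces this automorphism to be inner, and reading off the inner-ness condition yields $z \in [Z(M),g]$. Throughout I write $[a,g]=a^{-1}g^{-1}ag$, and I use the homomorphism $\lambda\colon G \to G/M \cong \mathbb{Z}/p\mathbb{Z}$ with $\lambda(g)=1$ and kernel $M$ (recall $M \trianglelefteq G$ has index $p$ and contains $\Phi(G)$). Since $G$ is nonabelian, I may also assume $M\neq 1$.

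First I would treat the main case $z\in M$. Because $z$ is central with $z^p=1$, the map $\alpha_z(x)=x\,z^{\lambda(x)}$ is well defined, and since $z$ is central and $\lambda$ is a homomorphism mod $p$ it is an endomorphism; it is injective (if $\alpha_z(x)=1$ then $x=z^{-\lambda(x)}\in\langle z\rangle\le M$, forcing $\lambda(x)=0$ and $x=1$), hence an automorphism. It fixes $M$, and in particular $\Phi(G)$, elementwise, sends $g$ to $gz$, and satisfies $\alpha_z^{\,p}=\mathrm{id}$ (as $\lambda(z)=0$ gives $\alpha_z^{\,k}(x)=xz^{k\lambda(x)}$); thus $\alpha_z$ has order $p$ whenever $z\neq 1$. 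By hypothesis $\alpha_z$ cannot be a noninner automorphism of order $p$ fixing $\Phi(G)$, so $\alpha_z\in Inn(G)$.

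The crux is to translate $\alpha_z\in Inn(G)$ into the desired membership. If $\alpha_z$ is conjugation by $h$, then, as $\alpha_z$ fixes $M$ pointwise, $h\in C_G(M)$, and the equation on $g$ reads $hgh^{-1}=gz$, i.e. $z=g^{-1}hgh^{-1}$. So I must show that $\{\,g^{-1}hgh^{-1}:h\in C_G(M)\,\}$ equals $[Z(M),g]$. The inclusion $\supseteq$ is immediate by taking $h\in Z(M)$. For $\subseteq$, the key observation is that an $h\in C_G(M)$ either lies in $M$, whence $h\in C_G(M)\cap M=Z(M)$ and $g^{-1}hgh^{-1}=[g,h^{-1}]\in[Z(M),g]$; or $h\notin M$, in which case $\langle M,h\rangle=G$ and $h$ centralizes both $M$ and itself, so $h\in Z(G)$ and $g^{-1}hgh^{-1}=1$. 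Either way the element lies in $[Z(M),g]$, giving equality and hence $z\in[Z(M),g]$.

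It remains to rule out $z\notin M$, for then $[Z(M),g]\le M$ could not contain $z$. If $z\in\Omega_1(Z(G))\setminus M$, then $z$ is central of order $p$ with $\langle z\rangle\cap M=1$ and $M\langle z\rangle=G$, so $G=M\times\langle z\rangle$ and $\Phi(G)=\Phi(M)$. Choosing $1\neq y\in\Omega_1(Z(M))$ (possible since $M\neq 1$), the assignment $\theta|_M=\mathrm{id}$, $\theta(z)=zy$ extends to an automorphism of order $p$ fixing $\Phi(G)=\Phi(M)$ elementwise; it is noninner because any conjugation fixing $M$ pointwise is conjugation by an element of $C_G(M)=Z(M)\langle z\rangle$, which necessarily fixes $z$ and so cannot realize $\theta(z)=zy\neq z$. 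This contradicts the hypothesis, so $z\in M$ after all. Combining the two cases gives $\Omega_1(Z(G))\le[Z(M),g]$ for every such $M$ and $g$, and the final assertion follows since $[Z(M),g]\le G'$. I expect the main obstacle to be the inner-ness computation of the third paragraph, specifically the clean identification of $\{\,g^{-1}hgh^{-1}:h\in C_G(M)\,\}$ with $[Z(M),g]$ through the fact that elements of $C_G(M)$ lying outside $M$ are automatically central in $G$.
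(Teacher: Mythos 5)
Your proof is correct, and while its core construction coincides with the paper's (the automorphism $\alpha$ with $(mg^i)^\alpha = mg^iz^i$, forced to be inner by hypothesis, with the inner-ness condition then read off to give $z \in [Z(M),g]$), it differs from the paper's proof in two genuine ways. First, the paper disposes of the possibility $z \notin M$ at the outset by invoking the Deaconescu--Silberberg theorem: it gives $\Phi(G) = C_G(Z(\Phi(G)))$, hence $Z(G) \le \Phi(G) \le M$. You instead handle $z \notin M$ by hand, observing that then $G = M \times \langle z\rangle$ and building the noninner automorphism $\theta$ fixing $M$ and sending $z \mapsto zy$ with $1 \ne y \in \Omega_1(Z(M))$ (noninner since inner automorphisms fix the central element $z$). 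This makes your argument self-contained where the paper's depends on a nontrivial external result; the price is the extra case and the implicit need for $M \ne 1$, i.e.\ for $G$ nonabelian --- an assumption the paper's proof also makes implicitly, since Deaconescu--Silberberg is a theorem about nonabelian $p$-groups. Second, the inner-ness analysis differs: the paper writes the conjugating element as $h = m'g^i$ and runs a commutator computation ($[m',g^i]=1$, $[m',g]=z$ central, hence $z^i=1$, forcing $i=0$ and $h = m' \in Z(M)$), whereas you split on whether $h \in C_G(M)$ lies in $M$ (giving $h \in Z(M)$ directly) or outside $M$ (giving $h \in Z(G)$, so the conjugation is trivial); your dichotomy is arguably cleaner and avoids the computation. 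One small point to make explicit in your case $h \in Z(M)$: the membership $g^{-1}hgh^{-1} = [g,h^{-1}] \in [Z(M),g]$ uses that $[Z(M),g]$ is a subgroup (equivalently, that $a \mapsto [a,g]$ is a homomorphism on the normal abelian subgroup $Z(M)$), which is exactly the fact the paper records immediately before the lemma, so it is available to you.
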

\begin{proof}
By the main result of \cite{DS}, we have
$\Phi(G)=C_G(Z(\Phi(G)))$ and so in particular $Z(G)\leq \Phi(G)$.
Take a maximal subgroup $M$ of $G$ and $g\in G\backslash M$.
Suppose, for a contradiction, that there exists an element
$z\in\Omega_1(Z(G))\backslash [Z(M),g]$.  Note that $z\in M$, as
$Z(G)\leq \Phi(G)$.  It is now easy to see that the map $\alpha$
on $G$ defined by $(mg^i)^\alpha=mg^iz^i$ for all $m\in M$ and
integers $i$, is an automorphism of order $p$ leaving $M$
elementwise fixed. If $\alpha$ is inner, then there exists $h\in
G$ such that $m^h=m$ for all $m\in M$ and $z=[h,g]$. Since
$G=M\langle g\rangle$, $h=m'g^i$ for some $i\in\{0,\dots,p-1\}$.
As $h\in C_G(M)$, we have $[m',m'g^i]=1$ which implies that
$[m',g^i]=1$. On the other hand, $[m'g^i,g]=z$ and so $[m',g]=z$,
as $z$ is central. Now it follows from $[m',g^i]=1$ that
$[m',g]^i=1$ and so $z^i=1$. Thus $i=0$ as $o(z)=p$. Therefore
$h=m'\in Z(M)$ and so $z\in [Z(M),g]$, a contradiction. Therefore
$\alpha$ is noninner, again a contradiction.
 This completes the proof.
\end{proof}
\begin{lem}\label{l1}
Let $G$ be a finite $p$-group such that  $G$ has no noninner
automorphism of order $p$ leaving  $\Phi(G)$  elementwise
fixed. Then
$$\Omega_1(Z(Inn(G)))\cong\Omega_1\big(\frac{Z_2(G)}{Z(G)}\big)\cong\underbrace{\Omega_1(Z(G))\times
\cdots \times \Omega_1(Z(G))}_{d(G)-\text{times}}.$$
\end{lem}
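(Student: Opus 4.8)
The plan is to turn the statement into a computation about a commutator pairing and then to feed the hypothesis in through a single central automorphism. The first isomorphism is purely formal: the canonical isomorphism $Inn(G)\cong G/Z(G)$ carries $Z(Inn(G))$ onto $Z\big(G/Z(G)\big)=Z_2(G)/Z(G)$, and since group isomorphisms preserve the subgroup generated by elements of order $p$, applying $\Omega_1$ gives $\Omega_1(Z(Inn(G)))\cong\Omega_1\big(Z_2(G)/Z(G)\big)$ at once. Writing $W:=Z_2(G)/Z(G)$, all the work is in the second isomorphism $\Omega_1(W)\cong\Omega_1(Z(G))^{d(G)}$.

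First I would introduce the map $\Theta\colon Z_2(G)\to Hom(G,Z(G))$ sending $z$ to $\phi_z\colon g\mapsto[z,g]$. Since $z\in Z_2(G)$ makes each $[z,g]$ central, $\phi_z$ is a homomorphism and, using $[z_1z_2,g]=[z_1,g][z_2,g]$, the assignment $\Theta$ is a homomorphism of abelian groups with kernel $Z(G)$; hence it induces an injection $\overline\Theta\colon W\hookrightarrow Hom(G,Z(G))$. The central identity $[z,g]^p=[z^p,g]$ shows that $\overline z\in\Omega_1(W)$ (that is, $z^p\in Z(G)$) holds exactly when $\phi_z$ takes values in $\Omega_1(Z(G))$, so $\overline\Theta$ restricts to an injection $\Omega_1(W)\hookrightarrow Hom(G,\Omega_1(Z(G)))$. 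Because $\Omega_1(Z(G))$ is elementary abelian, every homomorphism into it kills $\Phi(G)=G'G^p$, whence $Hom(G,\Omega_1(Z(G)))=Hom(G/\Phi(G),\Omega_1(Z(G)))\cong\Omega_1(Z(G))^{d(G)}$, using $G/\Phi(G)\cong(\mathbb Z/p)^{d(G)}$. Thus the target already has the right shape, and it remains only to prove that $\overline\Theta$ maps $\Omega_1(W)$ \emph{onto} $Hom(G,\Omega_1(Z(G)))$.

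This surjectivity is the crux, and the only place the hypothesis is used. Given $\phi\in Hom(G,\Omega_1(Z(G)))$, I would form $\alpha\colon g\mapsto g\,\phi(g)$; centrality of the values makes $\alpha$ an endomorphism, and since $\mathrm{im}\,\phi\le\Omega_1(Z(G))\le Z(G)\le\Phi(G)$ (the last inclusion by Deaconescu--Silberberg, as in the proof of Lemma \ref{1}) it induces the identity on $G/\Phi(G)$, so $\alpha$ is an automorphism fixing $\Phi(G)$ elementwise, and $g^{\alpha^k}=g\,\phi(g)^k$ forces $\alpha^p=1$. By hypothesis $\alpha$ cannot be a noninner automorphism of order $p$ fixing $\Phi(G)$, so $\alpha$ is inner, say conjugation by $h$; then $g\phi(g)=h^{-1}gh$ gives $\phi(g)=[g,h]$ for all $g$, and $\phi(g)\in Z(G)$ forces $[G,h]\le Z(G)$, i.e. $h\in Z_2(G)$. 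Since $\phi(g)=[g,h]=[h^{-1},g]=\phi_{h^{-1}}(g)$, we get $\phi=\overline\Theta(\overline{h^{-1}})$, so $\overline\Theta$ is onto and the desired isomorphism follows. I expect the delicate point to be this last step: checking that $\alpha$ is genuinely an automorphism of order dividing $p$ (the central-automorphism bookkeeping, which needs $Z(G)\le\Phi(G)$), and then reading off from ``$\alpha$ inner'' that its conjugator lies in $Z_2(G)$ --- this is exactly what converts the automorphism-theoretic hypothesis into the commutator realization we need.
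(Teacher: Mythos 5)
Your proof is correct, and while it rests on the same underlying mechanism as the paper's --- homomorphisms $G\to\Omega_1(Z(G))$ correspond to central automorphisms of order dividing $p$ fixing $\Phi(G)$ elementwise, and the hypothesis forces all of these to be inner --- you organize it in the opposite direction, and this changes the dependencies in a genuine way. The paper starts on the automorphism side: it defines the subgroup $C\le Aut(G)$ of central automorphisms of order dividing $p$ that fix $\Omega_1(Z(G))$ pointwise, identifies $C$ with $Hom\bigl(G/\Omega_1(Z(G)),\,\Omega_1(Z(G))\bigr)$, uses the hypothesis to force $C\le Inn(G)$ (whence $C\le\Omega_1(Z(Inn(G)))$, since central automorphisms centralize $Inn(G)$), and then checks the reverse inclusion; crucially, to compute the rank of its Hom-group it needs Lemma \ref{1} (that $\Omega_1(Z(G))\le G'$), so that $G/\Omega_1(Z(G))G'$ has rank $d(G)$. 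You instead start on the group side, with the commutator pairing $\overline\Theta\colon\Omega_1\bigl(Z_2(G)/Z(G)\bigr)\hookrightarrow Hom\bigl(G,\Omega_1(Z(G))\bigr)$, whose target is $Hom\bigl(G/\Phi(G),\Omega_1(Z(G))\bigr)\cong\Omega_1(Z(G))^{d(G)}$ with no appeal to Lemma \ref{1} at all; the hypothesis enters only once, to prove surjectivity. The price is that you need the Deaconescu--Silberberg inclusion $Z(G)\le\Phi(G)$ directly, to guarantee that $\alpha\colon g\mapsto g\phi(g)$ is bijective and that $\phi(\phi(g))=1$ (so $\alpha^p=1$); the paper avoids this at the corresponding step by building the condition ``fixes $\Omega_1(Z(G))$ pointwise'' into the definition of $C$, and pays for it later in the rank count. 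Net effect: your argument is a clean dual of the paper's with one fewer lemma in its dependency chain (Lemma \ref{1} is not used), though both ultimately rest on the same Deaconescu--Silberberg input.
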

\begin{proof}
Let  $C$ be the group of all automorphisms $\phi$ of $G$ such
that $x^{-1}x^\phi\in \Omega_1(Z(G))$ and $t^\phi=t$ for all
$x\in G$ and $t\in \Omega_1(Z(G))$. As $\Omega_1(Z(G))$ is a
characteristic subgroup of $G$, $C$ is a normal subgroup of
$Aut(G)$. Note that every such automorphism $\phi$ leaves $\Phi(G)$ elementwise fixed: for, as $\phi$ is a central automorphism of $G$, it fixes $G'$ elementwise; and since $x^{-1}x^\phi$ is a central element of order at most $p$, $1=(x^{-1}x^\phi)^p=x^{-p}(x^p)^\phi$. Thus $\phi$ fixes $G^p$ elementwise. Hence $\phi$  leaves  $\Phi(G)=G'G^p$ elementwise fixed.   Now the map $\psi$ from
$$T:=Hom\big(\frac{G}{\Omega_1(Z(G))},\Omega_1(Z(G))\big)$$ to
$C$ defined by $g^{\psi(f)}=g\big(g\Omega_1(Z(G))\big)^f$ for all
$f\in T$ and $g\in G$ is a group isomorphism.
  The abelian group $T$
 is isomorphic to $$\underbrace{\Omega_1(Z(G))\times \cdots
\times \Omega_1(Z(G))}_{d'-\text{times}},$$ where $d'$ is the rank
of $\frac{G}{\Omega_1(Z(G))G'}$. By Lemma \ref{1},
$\Omega_1(Z(G))\leq G'\leq \Phi(G)$, and from which it follows
that $d'=d(G)$. Thus $C$ is an elementary abelian $p$-group of
$Aut(G)$ and so by hypothesis, $C\leq Inn(G)$ which implies that
$C\leq \Omega_1(Z(Inn(G)))$. Now let $\rho_g$ be the inner
automorphism of $G$ induced by $g$ such that $\rho_g\in
\Omega_1(Z(Inn(G)))$. Thus $g^p\in Z(G)$ and $x^{gh}=x^{hg}$ for
all $x,h\in G$. It follows easily from these relations that
$\rho_g\in C$. Hence $C=\Omega_1(Z(Inn(G)))$. This completes the
proof.
\end{proof}
The contents of  the following result must be well known.
\begin{cor}\label{cor1}
Let $G$ be a finite $p$-group such that  $G$ has no noninner
automorphism of order $p$ leaving  $\Phi(G)$  elementwise
fixed. Then $d(\frac{Z_2(G)}{Z(G)})=d(Z(G))\cdot{d(G)}$.
\end{cor}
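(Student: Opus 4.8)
The plan is to deduce the statement directly from Lemma \ref{l1} together with a standard fact about finite abelian $p$-groups. First I would observe that both groups involved are abelian: $Z(G)$ is abelian by definition, while $Z_2(G)/Z(G)=Z(G/Z(G))$ is the center of $G/Z(G)$ and hence abelian. The elementary fact I would use is that for any finite abelian $p$-group $A$ one has $d(A)=d(\Omega_1(A))$. Indeed, writing $A$ as a direct product of cyclic groups, $\Omega_1(A)$ is the direct product of the unique order-$p$ subgroups of the cyclic factors, so it is elementary abelian of rank equal to the number of cyclic factors of $A$, which is precisely $d(A)$.

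Applying this fact to the abelian group $Z_2(G)/Z(G)$ gives $d\big(\frac{Z_2(G)}{Z(G)}\big)=d\big(\Omega_1(\frac{Z_2(G)}{Z(G)})\big)$. By Lemma \ref{l1}, the group $\Omega_1\big(\frac{Z_2(G)}{Z(G)}\big)$ is isomorphic to the direct product of $d(G)$ copies of the elementary abelian group $\Omega_1(Z(G))$; since a direct product of $d(G)$ copies of an elementary abelian group is again elementary abelian with rank multiplied by $d(G)$, its minimal number of generators is $d(G)\cdot d(\Omega_1(Z(G)))$. Finally, applying the same elementary fact to $Z(G)$ yields $d(\Omega_1(Z(G)))=d(Z(G))$, and chaining these equalities gives $d\big(\frac{Z_2(G)}{Z(G)}\big)=d(G)\cdot d(Z(G))$, as claimed.

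I do not expect a serious obstacle here: the corollary is essentially a translation of the isomorphism in Lemma \ref{l1} into the language of generator numbers. The only points requiring (minor) care are to note that $Z_2(G)/Z(G)$ is abelian, so that the identity $d(A)=d(\Omega_1(A))$ is available, and to keep the direct-product rank bookkeeping straight when passing from $\Omega_1(Z(G))^{d(G)}$ back to $d(Z(G))$.
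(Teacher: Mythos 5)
Your proposal is correct and follows exactly the paper's own argument: the paper's proof is the one-line observation that $d(A)=d(\Omega_1(A))$ for a finite abelian $p$-group $A$, combined with the isomorphism $\Omega_1\big(\frac{Z_2(G)}{Z(G)}\big)\cong \Omega_1(Z(G))^{d(G)}$ from Lemma \ref{l1}. You have simply spelled out the same bookkeeping in full detail, with no gaps.
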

\begin{proof}
Note that for a finite abelian $p$-group $A$, we have
$d(A)=d(\Omega_1(A))$. Now the proof follows from Lemma \ref{l1}.
\end{proof}
A finite $p$-group $G$ of order $p^n$, $n>2$, is called of coclass
$c$ whenever $G$ is nilpotent of class $n-c$.
\begin{cor}\label{cor:coclass1}
Let $G$ be a finite non-abelian $p$-group of coclass $1$. Then $G$
has a noninner automorphism of order $p$ leaving  $\Phi(G)$
elementwise fixed.
\end{cor}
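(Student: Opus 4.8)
The plan is to argue by contradiction, assuming that $G$ has no noninner automorphism of order $p$ leaving $\Phi(G)$ elementwise fixed, and to play this assumption against the numerical identity furnished by Corollary \ref{cor1}. First I would record the standard structural features of a $p$-group of coclass $1$ (equivalently, of maximal class) of order $p^n$, $n\geq 3$: such a group is $2$-generated, so $d(G)=2$; its center has order $p$, so $d(Z(G))=1$; and its upper central series satisfies $|Z_i(G)|=p^i$ for $1\leq i\leq n-2$ while $Z_{n-1}(G)=G$. Under the contradiction hypothesis, Corollary \ref{cor1} then forces $d\big(Z_2(G)/Z(G)\big)=d(Z(G))\cdot d(G)=2$.

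Next I would split on $n$. When $n\geq 4$ we have $2\leq n-2$, so $Z_2(G)$ has order $p^2$ and hence $Z_2(G)/Z(G)$ is cyclic of order $p$; thus $d\big(Z_2(G)/Z(G)\big)=1$, in flat contradiction with the value $2$ extracted from Corollary \ref{cor1}. Therefore for $n\geq 4$ the contradiction hypothesis is untenable, and the desired noninner automorphism of order $p$ fixing $\Phi(G)$ must exist.

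The remaining case $n=3$ is exactly where the counting degenerates, since here $Z_2(G)=G$ gives $d\big(Z_2(G)/Z(G)\big)=d(G/Z(G))=2$, which matches rather than contradicts Corollary \ref{cor1}; so this boundary case has to be disposed of separately. In this case $G$ is non-abelian of order $p^3$, hence of class $2$, and a direct check shows $\Phi(G)=Z(G)=\Omega_1(Z(G))$ (all of order $p$). Consequently the class-$2$ result of \cite{A}, which supplies a noninner automorphism of order $p$ fixing \emph{either} $\Phi(G)$ \emph{or} $\Omega_1(Z(G))$, already yields one fixing $\Phi(G)$, completing this case.

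I expect the main obstacle to be marshalling the maximal-class facts cleanly, in particular justifying $|Z_i(G)|=p^i$ for $i\leq n-2$ (so that $Z_2(G)/Z(G)$ is genuinely cyclic when $n\geq 4$) and correctly isolating the degenerate regime $n=3$ where the arithmetic of Corollary \ref{cor1} no longer bites. Once the value of $d\big(Z_2(G)/Z(G)\big)$ is pinned down in each regime, the contradiction with Corollary \ref{cor1} (for $n\geq 4$) and the appeal to the class-$2$ theorem (for $n=3$) are both immediate.
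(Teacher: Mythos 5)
Your proof is correct, and its core is exactly the paper's argument: play the identity $d\big(Z_2(G)/Z(G)\big)=d(Z(G))\cdot d(G)$ of Corollary \ref{cor1} against the structure of a group of maximal class, for which $d(G)=2$ and $|Z(G)|=p$. The paper's own proof is a one-liner: since $G$ is of maximal class, $|Z_2(G)/Z(G)|=p$ and $d(G)=2$, so Corollary \ref{cor1} gives the contradiction $1=2$. What you add, and what the paper glosses over, is the boundary case $|G|=p^3$: there $Z_2(G)=G$, so $|Z_2(G)/Z(G)|=p^2$ and $d\big(Z_2(G)/Z(G)\big)=2$, which \emph{agrees} with the value demanded by Corollary \ref{cor1} rather than contradicting it, so the counting argument genuinely breaks down (and the paper's assertion $|Z_2(G)/Z(G)|=p$ is false in this case). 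Your separate disposal of that case --- noting that for a non-abelian group of order $p^3$ one has $\Phi(G)=Z(G)=\Omega_1(Z(G))$, so the class-$2$ theorem of \cite{A} already produces a noninner automorphism of order $p$ fixing $\Phi(G)$ elementwise --- is precisely what is needed and cannot be absorbed into the counting. So your proposal is not merely correct: it quietly repairs a small gap in the published proof.
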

\begin{proof}
Since $G$ is of maximal class, we have that $|Z_2(G)/Z(G)|=p$ and $d(G)=2$. Now Corollary \ref{cor1} completes the result.
\end{proof}
\begin{thm}\label{coclass}
Let $G$ be a finite non-abelian $p$-group of coclass $c$. If $G$
has no noninner automorphism of order $p$ leaving
$\Phi(G)$ elementwise fixed, then $$d(Z(G))(d(G)+1)\leq c+1.$$
\end{thm}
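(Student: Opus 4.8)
The plan is to read the inequality off the upper central series $1=Z_0<Z_1<\cdots<Z_\ell=G$, where $\ell=\mathrm{cl}(G)$ is the nilpotency class; writing $|Z_i/Z_{i-1}|=p^{m_i}$ and recalling that $|G|=p^n$ with $n=\ell+c$, we have $c=\sum_{i=1}^{\ell}(m_i-1)$, so the task reduces to exhibiting enough factors that are forced to be large. I would use three lower bounds, with $s=d(Z(G))$ and $d=d(G)$. First, $m_1=\log_p|Z(G)|\ge d(Z(G))=s$. Second, Corollary \ref{cor1} gives $d(Z_2(G)/Z(G))=d(Z(G))\,d(G)=sd$, whence $m_2\ge sd$ (this is where the hypothesis really enters). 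Third, the top factor is non-cyclic: the quotient $G/Z_{\ell-2}(G)$ has class exactly $2$, so its central quotient $G/Z_{\ell-1}(G)$ cannot be cyclic, giving $m_\ell\ge 2$.

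When $\ell\ge 3$ the indices $1,2,\ell$ are distinct, and summing the three bounds (all remaining factors contributing $m_i-1\ge 0$) yields
\[
c=\sum_{i=1}^{\ell}(m_i-1)\ge (s-1)+(sd-1)+(2-1)=s(d+1)-1,
\]
which is exactly $d(Z(G))(d(G)+1)\le c+1$. Thus the real content lies in the case $\ell=2$, where the top factor coincides with $Z_2/Z_1$ and the three bounds collapse to $n=m_1+m_2\ge s(d+1)$, giving only $s(d+1)\le c+2$. Here I would show that the extremal value $n=s(d+1)$ cannot occur: equality forces $m_1=s$ and $m_2=sd$, so both $Z(G)$ and $G/Z(G)$ are elementary abelian; combined with $\Omega_1(Z(G))\le G'$ from Lemma \ref{1} this forces $G'=Z(G)=\Phi(G)$, and then $d=\log_p|G/\Phi(G)|=\log_p|G/Z(G)|=sd$, so $s=1$.

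Consequently, if $s\ge 2$ the minimum is unattainable, $n\ge s(d+1)+1$, and the bound holds for $\ell=2$ as well. The delicate point, and the main obstacle, is $\ell=2$ with $s=1$ at the extremal value $n=d+1$: there $G$ is extraspecial, and for such $G$ the asserted inequality $s(d+1)=d+1\le c+1=d$ literally fails, so these groups \emph{must} be excluded by the hypothesis. I would resolve this by noting that for extraspecial $G$ one has $\Phi(G)=Z(G)=\Omega_1(Z(G))$, whence the class-$2$ results (Liebeck \cite{L} for odd $p$, and \cite{A} in general) supply a noninner automorphism of order $p$ fixing $\Phi(G)$ or $\Omega_1(Z(G))$ — here the same subgroup — contradicting the hypothesis. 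This disposes of the case $\ell=2$, $s=1$ and completes the argument; as a sanity check it recovers Corollary \ref{cor:coclass1}, since $c=1$ would force $s(d+1)\le 2$, impossible for $d\ge 2$.
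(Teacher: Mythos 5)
Your proof is correct, and for class at least $3$ it is essentially the paper's own argument: the paper multiplies the same three lower bounds along the upper central series --- $|Z(G)|\geq p^{s}$, $|Z_2(G)/Z(G)|\geq p^{sd}$ (via Lemma \ref{l1}), and $|G/Z_{n-c-1}(G)|\geq p^{2}$, with the $n-c-3$ intermediate factors each contributing at least $p$ --- which is exactly your additive inequality $c\geq (s-1)+(sd-1)+1$ written multiplicatively. The genuine difference is your separate treatment of class $2$, and it is a real improvement. The paper's displayed product tacitly assumes the indices $1$, $2$ and $n-c$ are distinct (i.e.\ $n-c\geq 3$): when the class is $2$ the count $n-c-3$ of middle factors is negative and $Z_2(G)/Z(G)$ coincides with the top factor $G/Z_{n-c-1}(G)$, so the displayed inequality is not justified as written; what is needed is strictness in $s(d+1)\leq n$, which does not follow from the two factor bounds alone. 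Your analysis of the extremal case (equality forces $Z(G)=G'=\Phi(G)$, hence $s=1$ and $G$ extraspecial, which is excluded because by \cite{A} such a $G$ has a noninner automorphism of order $p$ fixing $\Phi(G)=\Omega_1(Z(G))$ elementwise) fills precisely this gap, so your write-up is more complete than the paper's. One small simplification: in class $2$ you can dispose of $s\geq 2$ immediately, since Corollary \ref{cor1} gives $sd=d(G/Z(G))\leq d(G)=d$ outright (as $G/Z(G)$ is a quotient of $G$), forcing $s=1$; only the extraspecial exclusion then remains.
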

\begin{proof}
Let $d=d(G)$, $\ell=d(Z(G))$ and $|G|=p^n$. By Lemma \ref{l1},
$|Z_2(G)/Z(G)|\geq p^{\ell d}$. Since $G$ is of coclass $c$ and
$|G/Z_{n-c-1}(G)|\geq p^2$,
$$p^\ell \cdot p^{\ell d}\cdot p^{n-c-3}\cdot p^2\leq |Z(G)||Z_2(G)/Z(G)|\cdots
|G/Z_{n-c-1}(G)|=p^n.$$ Thus $p^{\ell+d\ell+n-c-3+2}\leq p^n$ and
so $\ell(d+1)\leq c+1$. This completes the proof.
\end{proof}
A $p$-group $G$ is called $p$-central whenever $\Omega_1(G)=\Omega_1(Z(G))$.
\begin{thm}\label{power1}
Let $G$ be a non-abelian finite $p$-group
such that  $G/Z(G)$ is powerful. If either $p>2$ or $Z(G)$ is not cyclic then $Aut(G)$ contains a noninner
automorphism of order $p$ leaving   $\Phi(G)$   elementwise fixed; and if $p=2$, $Aut(G)$ contains a noninner
automorphism of order $2$ leaving   either $\Phi(G)$   or $\Omega_1(Z(G))$ elementwise fixed. There is a powerful $2$-group $G$ of class $2$ such that the only  automorphisms of order $2$ leaving $\Phi(G)$ elementwise fixed are inner automorphisms of $G$.
\end{thm}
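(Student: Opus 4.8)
The plan is to prove the positive assertions by contradiction, the engine being Corollary \ref{cor1} together with the Lubotzky--Mann fact \cite{LM} that every subgroup of a powerful $p$-group $P$ can be generated by $d(P)$ elements. Assume $G$ has no noninner automorphism of order $p$ leaving $\Phi(G)$ elementwise fixed. The Deaconescu--Silberberg reduction invoked in the proof of Lemma \ref{1} gives $Z(G)\le\Phi(G)$, so $d(G/Z(G))=d(G)$, while Corollary \ref{cor1} gives $d(Z_2(G)/Z(G))=d(Z(G))\,d(G)$. Since $Z_2(G)/Z(G)=Z(G/Z(G))$ is a subgroup of the powerful group $G/Z(G)$, its rank is at most $d(G/Z(G))=d(G)$; comparing the two expressions yields $d(Z(G))\,d(G)\le d(G)$, hence $d(Z(G))\le1$ and $Z(G)$ is cyclic.

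This already settles the noncyclic case for every $p$: if $Z(G)$ is not cyclic the assumption is contradicted, so the required noninner automorphism of order $p$ fixing $\Phi(G)$ exists (in particular this gives the $\Phi(G)$-alternative when $p=2$ and $Z(G)$ is noncyclic). There remains the case of cyclic $Z(G)$, where odd and even $p$ behave differently. For odd $p$ I would first upgrade the rank identity: with $Z(G)$ cyclic it reads $d(Z(G/Z(G)))=d(G)=d(G/Z(G))$, and since $\Omega_1(G/Z(G))$ is, for odd $p$, elementary abelian of rank $d(G/Z(G))$ \cite{LM} and contains the equally large $\Omega_1(Z(G/Z(G)))$, the two coincide; thus $G/Z(G)$ is $p$-central in the sense defined above.

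From this I would finish the odd-$p$ case elementarily. Writing $\Omega_1(Z(G))=\langle z\rangle$ and letting $\Omega$ be the preimage in $G$ of $\Omega_1(G/Z(G))$, a short commutator computation (each $g\in\Omega$ has $g^p\in Z(G)$, so that $[g,x]^p=[g^p,x]=1$ and $[g,x]\in\langle z\rangle$) shows $[\Omega,G]\le\langle z\rangle$ and produces a nondegenerate bilinear pairing $\Omega/Z(G)\times G/C_G(\Omega)\to\langle z\rangle$ of $\mathbb{F}_p$-spaces, each of dimension $d(G)$. The goal is to read off from this pairing a maximal subgroup $M$ and an element $g\notin M$ with $z\notin[Z(M),g]$, contradicting Lemma \ref{1}. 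I expect this to be the main obstacle, since it requires controlling $[Z(M),g]$ for all $M$ and $g\notin M$ simultaneously out of the power--commutator structure of the powerful, $p$-central quotient. It is worth recording that this same $p$-central conclusion is precisely the hypothesis of the cohomological result of Section 3 (Theorem \ref{p-coh}), which through Proposition \ref{prop:Sch} yields the automorphism independently; that is the alternative proof for odd $p$ announced in the introduction.

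For $p=2$ with cyclic centre the $\Phi(G)$-fixing construction may fail, and here I would instead produce, by a noncentral construction in the spirit of the class-$2$ analysis of \cite{A}, a noninner automorphism of order $2$ leaving $\Omega_1(Z(G))$ elementwise fixed, which supplies the second alternative of the statement. The last assertion I would establish by exhibiting an explicit powerful $2$-group of class $2$ in which every automorphism of order $2$ fixing $\Phi(G)$ elementwise is inner; by the rank argument above such a group is necessarily of cyclic centre, and I would locate one by a direct search among small powerful $2$-groups, verifying both that the central construction of Lemma \ref{1} yields only inner automorphisms (that is, $\Omega_1(Z(G))\le[Z(M),g]$ for all maximal $M$ and $g\notin M$) and that no noncentral automorphism of order $2$ fixes $\Phi(G)$ without being inner.
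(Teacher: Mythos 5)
Your opening reduction coincides with the paper's: assuming no noninner automorphism of order $p$ fixing $\Phi(G)$ elementwise, Corollary \ref{cor1} plus the Lubotzky--Mann bound on ranks of subgroups of powerful groups forces $d(Z(G))\le 1$, settling the noncyclic case for all $p$, and then $G/Z(G)$ is $p$-central. (One small repair: \cite{LM} does not say that $\Omega_1$ of a powerful group is elementary abelian of rank $d$; the paper instead notes that a noncentral element $x$ of order $p$ in $G/Z(G)$ would make $\langle x, Z(G/Z(G))\rangle$ an abelian subgroup of rank $d+1$, violating the subgroup rank bound.) The genuine problem is that everything after this point is missing. For odd $p$ you set up a pairing and then concede that producing a maximal subgroup $M$ and $g\notin M$ with $z\notin [Z(M),g]$ is ``the main obstacle''; that is exactly the step needing an idea, and controlling $Z(M)$ is not how the paper proceeds. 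Instead, the paper works inside $H$, the preimage of $\Omega_1(Z_2(G)/Z(G))$: since $d\geq 2$ and $Z(G)$ is cyclic, one can choose independent $a,b\in H\setminus Z(G)$ with (say) $a^p=b^{ps}$, and since $[H,G]\le\Omega_1(Z(G))$ and $p$ is odd, the Hall--Petrescu expansion gives $(ab^{-s})^p=1$; so there is $h\in H\setminus Z(G)$ with $h^p=1$. For $x\notin C_G(h)$ one then gets $(xh)^p=x^p$, and $ux^i\mapsto u(xh)^i$ ($u\in C_G(h)$) is an automorphism of order $p$ fixing the maximal subgroup $C_G(h)\supseteq\Phi(G)$ elementwise; it is noninner because an inner candidate would be conjugation by some $y\notin Z_2(G)$ with $y^p\in Z(G)$, impossible precisely because $G/Z(G)$ is $p$-central. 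Your fallback via Theorem \ref{p-coh} and Proposition \ref{prop:Sch} is legitimate in spirit (it is the paper's announced ``second proof'' for odd $p$), but as you state it, it is incomplete: $H^1\neq 0$ plus Proposition \ref{prop:Sch} yields a noninner automorphism of $p$-power order, not of order $p$. One also needs Lemma \ref{Z^1} (that $Z^1\big(\frac{G}{\Phi(G)},Z(\Phi(G))\big)$ is elementary abelian), i.e.\ condition (3) of the introduction, to extract an element of order $p$ outside $B^1$.

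The two remaining claims are not proved at all. For $p=2$ with cyclic center, the paper's argument is the bulk of the proof: it splits on whether $H$ is abelian (if not, it finds $K=\langle a,b\rangle$ with $G=KC_G(K)$ and invokes \cite{DS} together with \cite[Remark 2.5]{A}); in the abelian case it runs through the two possible structures of $H$, shows via the centralizers $C_G(h_i)$ that $G$ itself must be powerful of rank $2$, hence ordinary metacyclic, and then builds explicit noninner automorphisms of order $2$ from the King--Hempel presentation, case by case. ``A noncentral construction in the spirit of \cite{A}'' is not a substitute: \cite{A} treats class $2$, and here $G$ need not have class $2$. Likewise, the final assertion requires exhibiting an actual group; the paper points to Liebeck's example of order $128$ and records the order-$64$ powerful group $\langle a,b \mid a^4=b^4,\ b^{16}=1,\ b^4=[b,a]\rangle$. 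A ``direct search'' with no group produced, and no verification that only inner automorphisms of order $2$ fix $\Phi(G)$ elementwise, leaves that claim unestablished.
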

\begin{proof}
Suppose first that  $Aut(G)$ contains no noninner
automorphism of order $p$ leaving    $\Phi(G)$   elementwise fixed. By \cite[Theorem]{DS}, we may assume that $\Phi(G)=C_G(Z(\Phi(G)))$.  If $d(G)=d$, then $d(G/Z(G))\leq d$ and
it follows from \cite[1.12 and 4.2.2]{LM} that $d(Z(G/Z(G)))\leq d$. Now
Corollary \ref{cor1} implies that $d(Z\big(\frac{G}{Z(G)}\big))\geq d(Z(G))d$. This completes the proof whenever   $Z(G)$ is  non-cyclic. Thus, from now on, we further assume that $Z(G)$ is cyclic and $d(Z(G/Z(G)))=d$. Now we show that $\overline{G}=G/Z(G)$ is $p$-central. Let $x$ be an element of
order $p$ in $\overline{G}$. If $x\not\in Z(\overline{G})$, then
$d(\langle x, Z(\overline{G})\rangle)=d+1$, a contradiction, as
$d(\langle x,Z(\overline{G}) \rangle)\leq d$ by \cite[1.12 and
4.2.2]{LM}. It follows that
$\Omega_1(\overline{G})=\Omega_1(Z(\overline{G}))$.\\
Let $H$ be the subgroup of $G$ such that $H/Z(G)=\Omega_1(Z_2(G)/Z(G))$. Clearly  $H\leq Z_2(G)$ and $H/Z(G)$ is an elementary abelian group of rank $d$ and since $G$ is non-abelian, $d\geq 2$. Also $[H,G]\leq \Omega_1(Z(G))$ and $C_G(h)$ is a maximal subgroup of $G$ for all $h\in H\setminus Z(G)$: for the map $x\mapsto [h,x]$ is a group homomorphism from $G$ onto the cyclic group $\Omega_1(Z(G))$ of  order $p$ with the kernel $C_G(h)$. \\

We now show the conclusion for $p>2$.  We need to prove the following:\\

(*) \; There exist $h\in H\setminus Z(G)$ and $x\in G\setminus C_G(h)$  such that   $h^p=1$ and $(xh)^p=x^p$.\\

 Since $d\geq 2$, there are $a,b\in H\setminus Z(G)$ such that $ab^{k},a^kb\in H\setminus Z(G)$ for all integers $k$. As $a^p,b^p\in Z(G)$ and $Z(G)$ is cyclic, $a^p=b^{ps}$ or $a^{ps}=b^p$ for some integer $s$. Assume, without loss of generality, that $a^p=b^{ps}$. Now as $[a,b^{-s}]\in Z(G)$,  $a^p\in Z(G)$ and $p$ is assumed to be odd, we may write
$$(ab^{-s})^p=a^pb^{-ps}[b^{-s},a]^{\binom{p}{2}}=[b^{-s},a^p]^{\frac{p-1}{2}}=1.$$
Thus we have proved that if $p>2$, there is $h\in H\setminus Z(G)$ such that $h^p=1$.  Let $x$ be any element of $G\setminus C_G(h)$.  Since $[h,x]\in Z(G)$ and $h^p=1$ we have  $$(xh)^{p}=x^{p}h^{p}[h,x]^{\binom{p}{2}}=x^p[h^p,x]^{\frac{(p -1)}{2}}=x^p.$$
This completes the proof of  (*).\\
Now by (*), it is easy to check that the map $\beta$ on $G$ defined by $(ux^i)^\beta=u (xh)^i$ for all $u\in C_G(h)$ and all integers $i$, defines an automorphism of order $p$ which leaves $\Phi(G)$ elementwise fixed. If $\beta$ were inner, then $\beta$ would be conjugation by some element $y\in G\setminus Z_2(G)$ with $y^p \in Z(G)$. Since $G/Z(G)$ is $p$-central, it follows that $y\in Z_2(G)$, which is impossible.  This completes the proof for the case $p>2$.\\

From now on, we  assume that $p=2$ and we suppose, for a contradiction, that $Aut(G)$ contains no noninner
automorphism of order $p$ leaving  either  $\Phi(G)$ or $\Omega_1(Z(G))$   elementwise fixed. Note that $\Omega_1(Z(G))$ has only one non-trivial element $z$. \\

 First  suppose  that $H$ is non-abelian. Then there are $a,b\in H\setminus Z(G)$ such that $z=[a,b]$. Let $K=\langle a,b\rangle$. Note that $[H,G]=\langle [a,b]\rangle=K'$.  Now by an argument similar to \cite[Remark 2.2]{A}, we have that $G=KC_G(K)$. We give it here for the reader's convenience: for any $x\in G$, we have
$[a,x]=[a,b]^s$ and $[b,x]=[a,b]^t$ for some integers $s,t$. Then
$[a,b^{-s}a^tx]=1$ and $[b,b^{-s}a^tx]=1$. Hence $b^{-s}a^tx\in
C_G(\langle a,b\rangle)$ and so $G=\langle a,b\rangle C_G(\langle
a,b\rangle)$.  Note that  $Z(K)=\langle [a,b],a^2,b^2\rangle=\Phi(K)$. Now it follows from \cite[Theorem]{DS} that $K$ has a noninner automorphism  of order $2$ leaving  $Z(K)$ elementwise fixed.  Now by \cite[Remark 2.5]{A}, we have that $G$ has a noninner automorphism  of order $2$ leaving $Z(G)$ elementwise fixed, a contradiction. \\

Hence, we may assume that $H$ is abelian. Since $Z(G)$ is cyclic and $H/Z(G)$ is elementary abelian of rank $d$, one of the following may happen:\\

(1) \; $H=\langle h_1\rangle \times \cdots \times \langle h_d\rangle \times \langle h_{d+1}\rangle$, where $h_1,\dots,h_d$ are all of order $2$ and $Z(G)=\langle h_{d+1}\rangle$.\\

(2) \; $H=\langle h_1\rangle \times \cdots \times \langle h_{d-1}\rangle \times \langle h_{d}\rangle$, where $h_1,\dots,h_{d-1}$ are all of order $2$ and $Z(G)=\langle h^2_{d}\rangle$.\\

First suppose that $C_G(h_i)\not=C_G(h_j)$ for some $i$ and $j$ such that $i,j\leq d$ in the case (1) and  $d\geq 3$ and $i,j\leq d-1$ in the case (2). Since $C_G(h_i)$ and $C_G(h_j)$ are maximal subgroups of $G$, there exist elements $x_i\in C_G(h_i)\setminus C_G(h_j)$ and $x_j \in C_G(h_j) \setminus C_G(h_i)$. Thus $[x_i,h_j]=[x_j,h_i]=z$ and $G/C_G(h_i,h_j)\cong C_2\times C_2$. Now it is easy to see that the map $\phi$ on $G$ defined by $(ux_i^\ell x_j^k)^\phi=u(x_ih_i)^\ell (x_jh_j)^k$ for all $u\in C_G(h_i,h_j)$ and all integers $\ell,k$ is an automorphism of order $2$ leaving $\Phi(G)$ elementwise fixed. If $\phi$ were inner, we would have an element $g\in G\setminus H$ with $g^2\in Z(G)$, which is impossible as $G/Z(G)$ is $2$-central.   \\

Therefore, we may further assume that in the case (1): $C_G(h_1)=\cdots=C_G(h_d)$ and in the case (2): $C_G(h_1)=\cdots=C_G(h_{d-1})$.
Now we prove that the case (1) does not happen and in the case (2) we should have $d=2$. Suppose not. Then  $M=C_G(h_1)=C_G(h_2)$ is a maximal subgroup of $G$ and $h_1,h_2,h_1h_2\in H\setminus Z(G)$ are all of order 2. Let $x$ be any element of $G\setminus M$. Then $[x,h_1]=[x,h_2]=z$ and so $[x,h_1h_2]=1$ and so  $(xh_1h_2)^2=x^2$. It follows that the map $\alpha$ on $G$ defined by $(ux^i)^\alpha=u(xh_1h_2)^i$ for all $u\in M$ and all integers $i$, is an automorphism of $G$ of order $2$ leaving $\Phi(G)$ elementwise fixed. If $\alpha$ were inner, we would have an element $g\in G\setminus H$ with $g^2\in Z(G)$, a contradiction. \\

Thus it remains to settle the case (2) for $d=2$; i.e., $H=\langle h_1\rangle \times \langle h_2\rangle$, where $h_1^2=1$ and $Z(G)=\langle h_2^2\rangle$.  Now we prove that $G$ is also powerful. For this, it is sufficient to show $Z(G)\leq G^4$, since $G'Z(G)\leq G^4 Z(G)$ as $G/Z(G)$ is powerful. Note that since $[h_2,x^2]=1$ for all $x\in G$, we have $h_2\in C_G(\Phi(G))=Z(\Phi(G))$. Hence $h_2Z(G)\in \Phi(G)/Z(G)=\{a^2 Z(G) \;|\; a\in G\}$, since $G/Z(G)$ is powerful. Thus $h_2=a^2 h_2^{2s}$ for some integer $s$ and so $h_2=a^{2k}$ for some integer $k$. Therefore $Z(G)=\langle h_2^2\rangle=\langle a^{4k}\rangle\leq G^4$. Hence $G$ is a non-abelian powerful $2$-group of rank $2$.
Hence $G$ is a non-abelian ordinary metacyclic $2$-group \cite{LM} and so it follows from (\cite{King} or \cite{Hempel}) that $G$ has a presentation as following:
$$\langle a,b \;|\; a^{2^r}=b^{2^s}, b^{2^{s+t}}=1, b^a=b^{2^u+1}\rangle,$$
for some  integers $r\geq s\geq u\geq 2$ and $u\geq t\geq 0$. It follows from \cite[Corollary 4.5 and Lemma 2.1]{Hempel} that $Z(G)=\langle a^{2^{s+t-u}},b^{2^{s+t-u}}\rangle$, $G/G'\cong C_{2^r}\times C_{2^u}$, $|G|=2^{r+s+t}$, $exp(G)=2^{r+t}=o(a)$ and $o(b)=2^{s+t}$. Since $Z(G)$ is cyclic and $o(a)\ge o(b)$, it follows that $b^{2^{s+t-u}}\in \langle a\rangle \cap \langle b\rangle$. If $u>t$, then $|\langle a\rangle \cap \langle b\rangle|\geq 2^{t+1}$ and so $$2^{r+s+t}=|G|=\frac{|\langle a\rangle||\langle b\rangle|}{|\langle a\rangle \cap \langle b\rangle|}\leq 2^{r+s+t-1},$$
a contradiction. Thus $u=t\geq 2$ and $G$ has the following presentation:
$$\langle a,b \;|\; a^{2^r}=b^{2^s}, b^{2^{s+t}}=1, b^a=b^{2^t+1}\rangle, \;\;\;\text{for some integers}\;\; r\geq s\geq t\geq 2.$$
If $t=s$, then $G'\leq Z(G)$ and it follows from \cite{A} that  $G$ has a non-inner automorphism of order $2$ leaving $\Omega_1(Z(G))$ elementwise fixed.\\

 Now suppose that $s>t$. Thus $b^{2^{s-1}}\in G'$ and so  $h=b^{2^{s-1}}a^{-2^{r-1}}\not\in G'$ since $exp(G/G')=2^r$, and by $a^{2^r}=b^{2^s}$ we have that $h=b^{2^{s-1}}a^{-2^{r-1}}$ is of order $2$ so that  $h\in H=\langle g\in Z_2(G) \;|\; g^2\in Z(G)\rangle$.\\

If $r>s$, then we further have  $[b,h]=1$. Now it is easy to check (by using the latter presentation of $G$) that the map $\alpha$ on $G$ defined by $(a^ib^j)^{\alpha}=(ah)^ib^j$ for all integers $i,j$, is a non-inner automorphism of order $2$ leaving elementwise fixed $\langle b\rangle\geq \Omega_1(Z(G))$.\\

If $r=s$, then $[a,h]=[b,h]$ is of order $2$. It is now easy to check that  the map $\delta$ on $G$ defined by $(a^ib^j)^{\delta}=(ah)^i(bh)^j$ for all integers $i,j$, is a non-inner automorphism of order $2$ leaving elementwise fixed $\Omega_1(Z(G))$. This completes the proof for the case $p=2$.\\

H. Liebeck \cite[p. 272, Example]{L} considered the group $G$ with the following presentation
$$\langle a,b \;|\; a^4=[a,b,a]=1, b^8=[a,b]\rangle.$$
The group $G$ is of class $2$ and  since $G'=\langle [a,b]\rangle\leq\langle b\rangle^8\leq G^4$, $G$ is a powerful $2$-group.  As Liebeck observed the only automorphisms of
$G$ leaving $\Phi(G)$ elementwise fixed and having order 2 are of the form $\sigma$,
where $a^\sigma=av^{2r}$, $b^\sigma=bv^{2s}$ ($r = 0, 1$; $s = 0, 1$), $v=[a,b]$. These are
all inner automorphisms. The group constructed by Liebeck is of order 128. The following is a powerful $2$-group of order 64 and class 2 such that every automorphism of order $2$ leaving the Frattini subgroup elementwise fixed is an inner automorphism.
$$\langle a,b\;|\; a^4=b^4,b^{16}=1,b^4=[b,a]\rangle.$$
We leave the proof of the latter assertion to the reader.
\end{proof}
\begin{rem}
Regarding the proof of Theorem 2.6, case $p = 2$, one of the referees, who I am really  grateful to him/her for his/her ideas, has given the following argument to clarify some points of the proof:\\
The subgroup $H$ is the inverse image in $G$ of $\Omega_1(Z_2(G)/Z(G))$. At this point in the
proof, we have shown that $Z(G)$ is cyclic and that, letting $z \in Z(G)$ of order $2$, $H/Z(G)$ is isomorphic with $Hom(G/\Phi(G),\, \langle z \rangle)$; if $G/ \Phi(G)$ is
regarded as a $GF(p)$-vector space $V$, then $H/Z(G)$ is isomorphic to $V^*$ (the dual space of $V$).
Having shown that $H$ is abelian, one might let $\Omega_1(H) =\langle z \rangle \times D$, to
see that in our case (1), $C_G(D) = \Phi(G)$,
and in the case (2), $|C_G(D) : \Phi(G) | = 2$ and that, indeed, $C_G(h_i) \not= C_G(h_j)$ whenever
$1 \leq i \leq j \leq d$.
Hence (arguing as we did), case (1) is out and, in the case (2), we know that $d = 2$.
Having shown that $G$ must be itself powerful, hence ordinary metacyclic, we now invest some effort
into proving the existence of an involution contained in $Z_2(G) \setminus Z(G)$; this seems to be a
bit roundabout, since we already have got such an involution in the shape of $h_1$. 
One might argue like the following (this is much the same thing we did only shorter, and, in some sense, more transparent):\\
As  $G$ is known to be ordinary metacyclic, so $G$ has a normal
subgroup
$\langle b\rangle$ such that $G =\langle b \rangle \langle a \rangle$ for some $a\in G$ and $G' = \langle [a,\, b] \rangle
\leq \langle b^4\rangle$; in particular, $o(\langle b \rangle G') \geq 4$.
Furthermore, $z$ is the only involution in $G'$, so $h_1 \notin G'$.
If  $G/G' = \langle c_1 G'\rangle \times \langle c_2 G'\rangle$ with
$o(c_1 G') \geq 4 \leq o(c_2 G')$, then, letting $[c_i,h_1] = z^{\epsilon_i}$,
$i =1,2, \epsilon_i \in \{0,\, 1\}$, there is $\alpha \in Aut(G)$ of order 2 given by
$c_i^{\alpha} = c_i h_1^{\epsilon_i}$ , $i = 1,\,2$. As $h_1 \notin G', \alpha$ is not inner.
The only remaining possibility is that, say, $o(c_2 G')=2$. Now $b \notin \Phi(G)$ and
$o(bG') \geq 4$, so, without loss of generality, $c_1 = b$ and $c_2 = a$.
In particular, $a^2 \in G' \leq \langle b^4 \rangle$, and, as $G$ is ordinary metacyclic,
we may take $o(a) = 2$; i.e. it turns out that, in this case, actually $a = h_1$
(and, of course, $cl(G) = 2$). One might use \cite{A}, or else
point out that the automorphism $\alpha$ mapping $b$ to $ba$ ($= bh_1$) and $a$ to $a$, will do.
\end{rem}

\begin{lem}\label{z<4}
Let $G$ be a finite nilpotent $2$-generated group of class $2$.
Then $d(Z(G))\leq 3$.
\end{lem}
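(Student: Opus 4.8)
The plan is to exploit the class-$2$ hypothesis to control $G'$ and then to read off $Z(G)$ from the extension $1\to G'\to Z(G)\to Z(G)/G'\to 1$ of finite abelian groups. Throughout I write $G=\langle a,b\rangle$, which is legitimate since $G$ is $2$-generated.

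First I would show that $G'$ is cyclic. Because $G$ has class $2$, every commutator lies in the centre, so the commutator map is bilinear; in particular $[a^ib^j,a^kb^l]=[a,b]^{\,il-jk}$ for all integers $i,j,k,l$. Hence every commutator is a power of $[a,b]$, so $G'=\langle[a,b]\rangle$ is cyclic, and of course $G'\leq Z(G)$. Thus $d(G')\leq 1$.

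Next, $G/G'$ is a finite abelian group generated by the two images $aG',bG'$, so it is $2$-generated. The subgroup $Z(G)/G'$ of $G/G'$ is therefore also $2$-generated, since a subgroup of a $d$-generated finite abelian group is again $d$-generated (its $p$-rank cannot exceed that of the ambient group, for any prime $p$). Hence $d\bigl(Z(G)/G'\bigr)\leq 2$. Finally I would combine the two facts via the elementary observation that, for any short exact sequence $1\to A\to B\to C\to 1$ of finite abelian groups, one has $d(B)\leq d(A)+d(C)$, because generators of $A$ together with lifts of generators of $C$ generate $B$. Applying this to $1\to G'\to Z(G)\to Z(G)/G'\to 1$ yields
$$d(Z(G))\leq d(G')+d\bigl(Z(G)/G'\bigr)\leq 1+2=3,$$
which is the desired bound.

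The argument is short and I do not anticipate a serious obstacle; the only points requiring care are the two standard facts about finite abelian groups invoked above (that passing to a subgroup does not increase the minimal number of generators, and that an abelian extension needs at most the sum of the generator numbers of its kernel and quotient), both of which follow by comparing $p$-ranks. I would also remark that the nilpotence assumption is already implied by the class-$2$ hypothesis, and that finiteness is used only to guarantee these two abelian-group facts.
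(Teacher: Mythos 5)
Your proof is correct, but it takes a genuinely more structural route than the paper's. Both arguments share the same starting point: in a $2$-generated group of class $2$ the commutator map is bilinear, so $G'=\langle [a,b]\rangle$ is cyclic and contained in $Z(G)$. From there the paper works with explicit elements: it writes an arbitrary $x\in Z(G)$ as $a^ib^j[a,b]^k$, observes that centrality forces $[a,b]^i=[a,b]^j=1$, so the order $m$ of $[a,b]$ divides both $i$ and $j$, and concludes that $Z(G)=\langle a^m,b^m,[a,b]\rangle$, which gives the bound together with an explicit generating set for the centre. You instead bound ranks along the filtration $G'\leq Z(G)\leq G$: since $Z(G)/G'$ is a subgroup of the $2$-generated finite abelian group $G/G'$, it is itself $2$-generated, and then $d(Z(G))\leq d(G')+d\bigl(Z(G)/G'\bigr)\leq 1+2=3$. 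Your version trades the paper's hands-on computation for two standard facts about finite abelian groups (that subgroups of $d$-generated finite abelian groups are $d$-generated, which rests on the structure theorem or on PID module theory, and subadditivity of $d$ over extensions); in exchange it is cleaner and generalizes immediately, giving $d(Z(G))\leq d(G/G')+d(G')$ for any finite group of class $2$. The paper's argument is more elementary and self-contained, and it yields strictly more information in this special case, namely the explicit generators $a^m$, $b^m$, $[a,b]$ of $Z(G)$.
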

\begin{proof}
Suppose that $G=\langle a,b\rangle$ and let $x\in Z(G)$. Since $G$
is nilpotent of class 2, $x=a^ib^j[a,b]^k$ for some integers
$i,j,k$.  As $G'\leq Z(G)$, $[a^ib^j,a]=[a^ib^j,b]=1$. It follows
that $[a,b]^i=[a,b]^j=1$ and so $|[a,b]|=k$ divides both $i$ and
$j$. This implies that $Z(G)=\langle a^k,b^k,[a,b]\rangle$ and so
$d(Z(G))\leq 3$.
\end{proof}
\begin{thm}\label{l11}
Let $G$ be a finite $p$-group of class $3$.  If  $G/Z(G)$ is
$2$-generated and $Z(G)$ is not cyclic, then $Aut(G)$ contains a
noninner automorphism of order $p$ leaving  $\Phi(G)$  elementwise
fixed.
\end{thm}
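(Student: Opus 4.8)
The plan is to argue by contradiction: assume $Aut(G)$ contains no noninner automorphism of order $p$ leaving $\Phi(G)$ elementwise fixed, and then derive incompatible upper and lower bounds for $d\big(Z_2(G)/Z(G)\big)$ by playing Corollary \ref{cor1} off against Lemma \ref{z<4}. The whole argument turns on passing to the quotient $G/Z(G)$ and recognizing it as a $2$-generated group of class $2$.

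First I would pin down $d(G)$. Under the standing assumption I may invoke \cite{DS} and take $\Phi(G)=C_G(Z(\Phi(G)))$, which forces $Z(G)\leq\Phi(G)$. Since $G/\Phi(G)$ is then a quotient of the $2$-generated group $G/Z(G)$, this gives $d(G)=d\big(G/\Phi(G)\big)\leq d\big(G/Z(G)\big)\leq 2$; and as $G$ has class $3$ it is non-abelian, so $d(G)=2$.

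Next I would show that $G/Z(G)$ has class exactly $2$. Because $G$ has class $3$ we have $\gamma_4(G)=[\gamma_3(G),G]=1$, that is, $\gamma_3(G)\leq Z(G)$, so $\gamma_3\big(G/Z(G)\big)=1$ and the class of $G/Z(G)$ is at most $2$. It cannot drop to $1$: if $G'\leq Z(G)$ then $\gamma_3(G)=1$, contradicting class $3$. Hence $G/Z(G)$ is a $2$-generated group of class $2$, and Lemma \ref{z<4} yields $d\big(Z(G/Z(G))\big)\leq 3$, i.e. $d\big(Z_2(G)/Z(G)\big)\leq 3$.

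Finally I would collide this with the lower bound. Corollary \ref{cor1} gives $d\big(Z_2(G)/Z(G)\big)=d(Z(G))\cdot d(G)=2\,d(Z(G))$, and since $Z(G)$ is non-cyclic we have $d(Z(G))\geq 2$, whence $d\big(Z_2(G)/Z(G)\big)\geq 4$, contradicting the bound $\leq 3$ just obtained. Thus the assumption fails and the required automorphism exists. The only genuinely non-routine step is the identification of the class of $G/Z(G)$ as exactly $2$, which is what makes Lemma \ref{z<4} applicable; once that is in place the conclusion is the immediate arithmetic incompatibility of $2\,d(Z(G))\leq 3$ with $d(Z(G))\geq 2$.
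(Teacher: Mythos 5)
Your proof is correct and follows essentially the same route as the paper: assume no such automorphism exists, use Corollary \ref{cor1} to get $d\big(Z_2(G)/Z(G)\big)=d(Z(G))\,d(G)\geq 4$, and contradict the bound $d\big(Z_2(G)/Z(G)\big)\leq 3$ obtained from Lemma \ref{z<4} applied to the $2$-generated class-$2$ group $G/Z(G)$. The paper's proof is simply a terser version of yours; the details you supply (pinning down $d(G)=2$ via \cite{DS}, and checking that $G/Z(G)$ has class exactly $2$ so that Lemma \ref{z<4} applies) are left implicit there.
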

\begin{proof}
Suppose, for a contradiction that, the conclusion is false. Then
by Corollary \ref{cor1}, the minimum number of generators of
$Z:=Z(\frac{G}{Z(G)})$ is  $4$. On the other hand, by Lemma
\ref{z<4}, $d(Z)\leq 3$, a contradiction. This completes the
proof.
\end{proof}
\section{\bf Cohomologically trivial modules and noninner $p$-automorphisms of finite $p$-groups}
In this section we prove non-triviality of Tate cohomology
$$H^n(G/N,Z(N))  \;\; \text{for all}\; n,$$ for certain $p$-groups $G$ and normal
subgroups $N\lhd G$.\\
As we mentioned in Section 1, in \cite{S1}, Schmid proved that if $G$ is a regular $p$-group
and $N$ a nontrivial normal subgroup of $G$, then
$H^n(Q,Z(N))\not=0$ for all $n$ and all non-cyclic $Q=G/N$. Here
we prove the same cohomological property for certain classes of
groups. We first need the following result concerning
cohomologically trivial groups.
\begin{lem}\label{p-central}
Let $p$ be an odd prime and $G$ be a finite  $p$-group such that
$G/Z(G)$ is $p$-central. Suppose that $A$ is a normal abelian
subgroup of $G$, $a\in A$ and $g\in G$ such that $g^p\in C_G(A)$.
If $p>3$ then $a^{g^{p-1}+\cdots+g+1}= a^p$ and if $p=3$ then
$a^{g^2+g+1}=a^3 z$ for some central element $z$ of $G$.
\end{lem}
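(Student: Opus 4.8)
The plan is to work entirely inside the normal abelian subgroup $A$ through the conjugation operator, and to reduce the whole statement to the single fact that $[a,g]^p\in Z(G)$.

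First I would set up notation. Write $a^g=g^{-1}ag$ and let $\phi\colon A\to A$ be the automorphism $x\mapsto x^g$; since $g^p\in C_G(A)$ we have $\phi^p=\mathrm{id}$. Put $d_0=a$ and $d_j=[d_{j-1},g]=[a,\underbrace{g,\dots,g}_{j}]$, so that every $d_j$ lies in the abelian group $A$ and, writing $T=\phi-\mathrm{id}$ additively on $A$, one has $d_j=T^ja$. Because $A$ is a finite $p$-group and $\phi$ has $p$-power order, $T$ is nilpotent, so $d_j=1$ for all large $j$. Expanding $a^{g^k}=(\mathrm{id}+T)^k a$ gives the collection identity $[a,g^k]=\prod_{j\ge1}d_j^{\binom{k}{j}}$, and summing the geometric series $\sum_{k=0}^{p-1}(\mathrm{id}+T)^k=\sum_{i=0}^{p-1}\binom{p}{i+1}T^i$ yields the key expansion
\[
a^{g^{p-1}+\cdots+g+1}=\prod_{i=0}^{p-1}d_i^{\binom{p}{i+1}}=a^{p}\,d_1^{\binom p2}d_2^{\binom p3}\cdots d_{p-2}^{\binom p{p-1}}\,d_{p-1}.
\]
Every binomial coefficient $\binom{p}{i+1}$ with $1\le i\le p-2$ is divisible by $p$, while the last is $\binom{p}{p}=1$; so the whole problem is to control the error factor $E:=d_1^{\binom p2}\cdots d_{p-1}$.

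The crux, which I expect to be the main obstacle, is to prove $[a,g]^p=d_1^{p}\in Z(G)$: once this is known the $p$-central hypothesis finishes the structural part, since $d_1^{p}\in Z(G)$ forces $\overline{d_1}\in\Omega_1(G/Z(G))\subseteq Z(G/Z(G))$, i.e. $d_1\in Z_2(G)$, whence $d_2=[d_1,g]\in Z(G)$ and $d_j=1$ for all $j\ge3$. To reach $d_1^{p}\in Z(G)$ I would run a downward induction on $k$ proving the combined statement
\[
\text{$P(k)$:}\qquad d_i^{p}\in Z(G)\ \ (i\ge k)\quad\text{and}\quad d_i\in Z(G)\ \ (i\ge k+1).
\]
The base case holds for large $k$ since then all $d_i=1$. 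For the step I apply the collection identity to the seed $d_{k-1}$ together with $g^p\in C_G(A)$: from $[d_{k-1},g^p]=1$,
\[
1=\prod_{j=1}^{p}d_{k-1+j}^{\binom pj}=d_k^{\,p}\cdot\prod_{j=2}^{p-1}d_{k-1+j}^{\binom pj}\cdot d_{k-1+p}.
\]
In the middle product every exponent is a multiple of $p$ and each base $d_{k-1+j}$ (index $\ge k+1$) has $p$-th power in $Z(G)$ by the inductive hypothesis, so those factors lie in $Z(G)$; the tail $d_{k-1+p}$ has index $\ge k+2$ (as $p\ge3$), so it too lies in $Z(G)$. Hence $d_k^{p}\in Z(G)$, and then $p$-centrality gives $d_k\in Z_2(G)$ and $d_{k+1}=[d_k,g]\in Z(G)$, establishing $P(k)$. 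Taking $k=1$ delivers $d_1^{p}\in Z(G)$ and $d_2\in Z(G)$.

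It then remains to evaluate $E$. With $d_2\in Z(G)$ we have $d_j=1$ for $j\ge3$, and the collection identity applied to $d_1$ gives $1=[d_1,g^p]=d_2^{\,p}$, while applied to $a$ it gives $1=[a,g^p]=d_1^{p}d_2^{\binom p2}$. For $p>3$ the exponent $\binom p2=p(p-1)/2$ is a multiple of $p$, so $d_2^{\binom p2}=(d_2^{p})^{(p-1)/2}=1$ and therefore $d_1^{p}=1$; feeding $d_1^{p}=d_2^{p}=1$ and $d_{p-1}=1$ (valid since $p-1\ge4$) back into $E$ collapses every factor, giving $a^{g^{p-1}+\cdots+g+1}=a^{p}$. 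For $p=3$ the surviving part of $E$ is $d_1^{3}d_2$; here $d_1^{3}d_2^{3}=1$ gives $d_1^{3}=d_2^{-3}$, so $E=d_2^{-3}d_2=d_2^{-2}$, which is the central element $z$, yielding $a^{g^2+g+1}=a^{3}z$. The only place where oddness of $p$ and the split between $p>3$ and $p=3$ enter is precisely this final cancellation, so I would keep the case analysis until the very last step.
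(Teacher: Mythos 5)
Your proof is correct, and it reaches the lemma by a genuinely different route than the paper. The paper's proof imports two results: the Hall--Petrescu collection formula, and the Longobardi--Maj theorem cited as [LoM, Theorem 5] (in a $p$-central $p$-group with $p$ odd, elements with the same $p$-th power commute). Applying the latter in $G/Z(G)$ to $\bar g$ and $\overline{g^a}$, whose $p$-th powers coincide because $g^p\in C_G(A)$, the paper gets $[g^a,g]\in Z(G)$; Hall--Petrescu then gives $[g,a]^p=1$ for all $a\in A$, i.e.\ the derived subgroup of $\langle A,g\rangle$ has exponent dividing $p$, and a second application of Hall--Petrescu collapses the trace. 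You bypass both ingredients: your collection identities are exactly the specialization of Hall--Petrescu to an abelian normal subgroup, but you derive them from scratch as polynomial identities in the commuting operator $T=\phi-\mathrm{id}$ (legitimate because $A$ is abelian, so these operators live in the commutative subring of $\mathrm{End}(A)$ generated by $\phi$), and in place of Longobardi--Maj your downward induction uses nothing but the definition of $p$-centrality ($d_k^p\in Z(G)$ forces $d_k\in Z_2(G)$, hence $d_{k+1}=[d_k,g]\in Z(G)$) to arrive at $[a,g]^p\in Z(G)$, $[a,g,g]\in Z(G)$ and $[a,g,g,g]=1$, after which the same identities bootstrap to $[a,g]^p=1$ when $p>3$. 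The paper's version is shorter given the citations; yours is self-contained and elementary, exposes exactly where $p$-centrality and oddness of $p$ enter, and yields slightly finer intermediate data (e.g.\ $d_2^p=1$; and for $p=3$ your $z=d_2^{-2}$, which equals $d_2=[a,g,g]$ since $d_2^3=1$, matching the paper's $z$). One small correction to your closing commentary only: oddness of $p$ is not confined to the final cancellation --- your own inductive step needs $p\ge 3$ so that the tail term $d_{k-1+p}$ has index at least $k+2$, exactly as you noted parenthetically.
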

\begin{proof}
Let $H=\langle A,g\rangle$. We have $[a,g^p]=1$  and so $(g^a
Z(G))^p=(gZ(G))^p$ for all $a\in A$. Now as $G/Z(G)$ is
$p$-central, we have $[g^a,g]\in Z(G)$ (see e.g.
\cite[Theorem 5]{LoM}). Now it follows from the Hall-Petrescu formula that
$$(g^{-1}g^a)^p=g^{-p}(g^p)^a[g^a,g^{-1}]^{\binom{p}{2}}
[g^a,g^{-1},g^{-1}]^{\binom{p}{3}}\cdots [g^a,_{p-1}g^{-1}].$$
Since $p\geq 3$, $g^p=(g^p)^a$ and $[g^a,g^{-1}]\in Z(G)$  , we
can write
$$(g^{-1}g^a)^p=([g^a,g^{-1}]^{p})^{(p-1)/2}=[(g^p)^a,g^{-1}]^{(p-1)/2}=[g^p,g^{-1}]^{(p-1)/2}=1.$$
Therefore $[g,a]^p=1$ for all $a\in A$ and so the derived
subgroup $H'$ of $H$ is of exponent dividing $p$. Now  by
the Hall-Petrescu formula we have
$$a^{g^{p-1}+\cdots+g+1}=a^p[a,g]^{\binom{p}{2}}[a,_2
g]^{\binom{p}{3}}\cdots [a,_{p-1}g].$$ Since $exp(H')$ divides
$p$, $[a,_{i}g]^{\binom{p}{i+1}}=1$ for all $i\in\{1,\dots,p-2\}$
and as $[g^a,g]\in Z(G)$, we have $[a,_2 g]\in Z(G)$ and if $p>3$,
$[a,_{p-1}g]=1$. This completes the proof.
\end{proof}
\begin{lem}\label{nil=3}
Let $p$ be an odd prime and $G$ be a finite  $p$-group of class
at most $3$.  Suppose that $A$ is a normal abelian subgroup of
$G$, $a\in A$ and $g\in G$ such that $g^p\in C_G(A)$. Then
$a^{g^{p-1}+\cdots+g+1}= a^pz$  for some central element $z$ of
$G$.
\end{lem}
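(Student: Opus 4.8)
The computation runs parallel to that of Lemma~\ref{p-central}, the only difference being that the hypothesis of class at most $3$ replaces $p$-centrality as the device controlling the higher commutators. First I would invoke the same Hall--Petrescu expansion used there, namely
$$a^{g^{p-1}+\cdots+g+1}=a^p[a,g]^{\binom{p}{2}}[a,_2 g]^{\binom{p}{3}}\cdots [a,_{p-1}g],$$
valid in any group. Since $G$ has class at most $3$, every left-normed commutator $[a,_i g]$ has weight $i+1$, so all terms with $i\geq 3$ are trivial and the right-hand side collapses to $a^p[a,g]^{\binom{p}{2}}[a,_2 g]^{\binom{p}{3}}$ (here $\binom{p}{3}=1$ when $p=3$). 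It therefore suffices to prove that both $[a,g]^{\binom{p}{2}}$ and $[a,_2 g]^{\binom{p}{3}}$ lie in $Z(G)$, for then their product is the desired central element $z$.

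The factor $[a,_2 g]=[[a,g],g]$ is a weight-$3$ commutator, hence lies in $[G',G]$; as $G$ has class at most $3$ we have $[G',G]\leq Z(G)$, so $[a,_2 g]\in Z(G)$ and $[a,_2 g]^{\binom{p}{3}}\in Z(G)$ at once. The remaining, and only delicate, point is the factor $[a,g]^{\binom{p}{2}}$. Writing $\binom{p}{2}=p\cdot\frac{p-1}{2}$, which is legitimate because $p$ is odd, I would reduce the problem to the single claim that $[a,g]^p$ is central: granting $[a,g]^p\in Z(G)$, the power $[a,g]^{\binom{p}{2}}=\big([a,g]^p\big)^{(p-1)/2}$ is then automatically central.

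To obtain $[a,g]^p\in Z(G)$ I would bring in the so-far unused hypothesis $g^p\in C_G(A)$, i.e. $[a,g^p]=1$. Put $c=[a,g]$; then $c\in G'\leq Z_2(G)$ (again because $[G',G]\leq Z(G)$), so $[c,g]=[a,_2 g]$ is central and $c^{g^i}=c[c,g]^i$ for every $i$. Feeding this into the standard collection $[a,g^p]=c\cdot c^{g}\cdots c^{g^{p-1}}$ gives
$$1=[a,g^p]=c^p[c,g]^{\binom{p}{2}}=[a,g]^p\,[a,_2 g]^{\binom{p}{2}},$$
whence $[a,g]^p=[a,_2 g]^{-\binom{p}{2}}\in Z(G)$, as wanted. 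Collecting the two central factors then yields $a^{g^{p-1}+\cdots+g+1}=a^pz$. I expect this last step to be the main obstacle: in an arbitrary class-$3$ group $[a,g]^p$ need not be central, since the exponent of the weight-$3$ commutators is not otherwise controlled, so the whole argument hinges on extracting the relation $[a,g]^p[a,_2 g]^{\binom{p}{2}}=1$ from $[a,g^p]=1$. Everything else is routine bookkeeping with commutator identities that simplify precisely because all weight-$3$ commutators are central.
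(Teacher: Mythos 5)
Your proof is correct, and it diverges from the paper's at the one point that matters. Both arguments rest on the same expansion $a^{g^{p-1}+\cdots+g+1}=a^p[a,g]^{\binom{p}{2}}[a,_2 g]^{\binom{p}{3}}\cdots[a,_{p-1}g]$ and both use class at most $3$ to kill the weight-$\geq 4$ terms and to place $[a,_2g]$ in $Z(G)$; the difference is how the term $[a,g]^{\binom{p}{2}}$ is handled. The paper applies the Hall--Petrescu formula a second time, to $(g^{-1}g^a)^p$, and uses $g^p=(g^p)^a$ together with $[g^a,g^{-1}]\in Z(G)$ to conclude the stronger statement $[a,g]^p=1$ for all $a\in A$ (so that $H'=\langle A,g\rangle'$ has exponent dividing $p$ and the middle terms vanish outright, leaving $z=[a,_{p-1}g]$, which is trivial for $p>3$ and central for $p=3$). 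You instead expand $1=[a,g^p]=c\,c^g\cdots c^{g^{p-1}}=c^p[c,g]^{\binom{p}{2}}$ with $c=[a,g]$, obtaining only the centrality $[a,g]^p=[a,_2g]^{-\binom{p}{2}}\in Z(G)$, which nevertheless suffices since $[a,g]^{\binom{p}{2}}=\bigl([a,g]^p\bigr)^{(p-1)/2}$. Your route is more elementary for this step (only the basic identity $[x,yz]=[x,z][x,y]^z$ and induction, no second Hall--Petrescu application), at the price of a weaker intermediate fact; the paper's route yields the exponent-$p$ statement, which keeps the proof parallel to that of Lemma \ref{p-central}, where the same fact is established. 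One small caution: your parenthetical claim that the displayed expansion is ``valid in any group'' is not accurate as stated --- with exact binomial exponents on left-normed commutators it requires the commutators $[a,_ig]$ to commute, which holds here precisely because $a$ lies in the normal \emph{abelian} subgroup $A$ (so all $[a,_ig]\in A$); this is also the setting in which the paper invokes it, so your application is sound, but the justification should cite the hypothesis on $A$ rather than generality.
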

\begin{proof}
Let $H=\langle A,g\rangle$. We have $[a,g^p]=1$   for all $a\in
A$.  Now by the Hall-Petrescu formula  we have
$$(g^{-1}g^a)^p=g^{-p}(g^p)^a[g^a,g^{-1}]^{\binom{p}{2}}
[g^a,g^{-1},g^{-1}]^{\binom{p}{3}}\cdots [g^a,_{p-1}g^{-1}].$$
Since  $g^p=(g^p)^a$, $p\geq 3$ and $[g^a,g^{-1}]\in Z(G)$, we
can write
$$(g^{-1}g^a)^p=([g^a,g^{-1}]^{p})^{(p-1)/2}=[(g^p)^a,g^{-1}]^{(p-1)/2}=[g^p,g^{-1}]^{(p-1)/2}=1.$$
Therefore $[g,a]^p=1$ for all $a\in A$ and so the derived
subgroup $H'$ of $H$ is of exponent dividing $p$. Now  by the
Hall-Petrescu formula  we have
$$a^{g^{p-1}+\cdots+g+1}=a^p[a,g]^{\binom{p}{2}}[a,_2
g]^{\binom{p}{3}}\cdots [a,_{p-1}g].$$ Since $exp(H')$ divides
$p$, $[a,_{i}g]^{\binom{p}{i+1}}=1$ for all $i\in\{1,\dots,p-2\}$
and as $[g^a,g]\in Z(G)$, we have $[a,_2 g]\in Z(G)$. This
completes the proof.
\end{proof}
\begin{lem}\label{p=2nil=3}
Let  $G$ be a finite  $2$-group of class at most $3$.  Suppose
that $A$ is a normal abelian subgroup of $G$, $a\in A$ and $x,y\in
G$ such that $x^2,y^2,(xy)^2\in C_G(A)$. Then $a^{xy+y+x+1}= a^4z$
for some central element $z$ of $G$.
\end{lem}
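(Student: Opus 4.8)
The plan is to read the module expression $a^{xy+y+x+1}$ as the ordinary product of conjugates $a^{xy}\,a^{y}\,a^{x}\,a$ inside the abelian group $A$. Since $A$ is normal and abelian, every conjugate $a^{u}$ and every commutator $[a,u]$ lies in $A$, and all of these commute with one another; this lets me expand freely and collect terms without worrying about ordering. Writing $a^{u}=a\,[a,u]$ for each of $u=x,y,xy$ and pulling the four copies of $a$ to the front gives
\[
a^{xy}\,a^{y}\,a^{x}\,a = a^{4}\,[a,xy]\,[a,y]\,[a,x].
\]
So the whole problem reduces to showing that the product of commutators $[a,xy]\,[a,y]\,[a,x]$ is a central element of $G$.

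Next I would expand the mixed term. From $a^{xy}=(a^{x})^{y}$ and $a^{x}=a\,[a,x]$ one computes, using commutativity in $A$, that $[a,xy]=[a,x]\,[a,y]\,[a,x,y]$, where $[a,x,y]=[[a,x],y]$. Substituting and again collecting in the abelian group $A$ yields
\[
[a,xy]\,[a,y]\,[a,x] = [a,x]^{2}\,[a,y]^{2}\,[a,x,y].
\]
Thus it suffices to prove that each of the three factors $[a,x]^{2}$, $[a,y]^{2}$ and $[a,x,y]$ is central in $G$.

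Here the bound on the class enters decisively. Since $G$ has class at most $3$, we have $\gamma_{4}(G)=1$, hence $[\gamma_{3}(G),G]=1$, i.e. $\gamma_{3}(G)\le Z(G)$. The term $[a,x,y]$ already lies in $\gamma_{3}(G)$ and is therefore central with no further work. For the squares I would use the hypothesis $x^{2}\in C_{G}(A)$: computing $a^{x^{2}}=(a^{x})^{x}$ as above gives $1=[a,x^{2}]=[a,x]^{2}\,[a,x,x]$, so that $[a,x]^{2}=[a,x,x]^{-1}\in\gamma_{3}(G)\le Z(G)$; the condition $y^{2}\in C_{G}(A)$ puts $[a,y]^{2}\in\gamma_{3}(G)\le Z(G)$ in exactly the same way. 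Setting $z:=[a,x]^{2}\,[a,y]^{2}\,[a,x,y]\in Z(G)$ then yields $a^{xy+y+x+1}=a^{4}z$, as required.

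The conceptual point — and the feature that forces the two-variable formulation, rather than the one-variable norm used for odd $p$ in Lemma \ref{nil=3} — is that for $p=2$ the first-order commutators $[a,x]$ and $[a,y]$ are \emph{not} themselves central, so a single-generator expression $a^{1+x}=a^{2}[a,x]$ would not be of the desired shape; it is precisely the Klein-four norm $1+x+y+xy$ that makes $[a,x]$ and $[a,y]$ survive only through their squares, which the squaring hypotheses then push into the central subgroup $\gamma_{3}(G)$. I expect the only genuine bookkeeping hazard to be the collection identity for $[a,xy]$ and keeping track that all factors really lie in the abelian group $A$; once that is in place the argument is a short computation. I note that the stated hypothesis $(xy)^{2}\in C_{G}(A)$ does not seem to be needed for this particular conclusion, and is presumably recorded because it is the natural condition (the $\langle x,y\rangle$-action on $A$ factoring through a Klein four group) under which the lemma will be applied.
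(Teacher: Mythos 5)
Your proof is correct and follows essentially the same route as the paper's: expand $a^{xy+y+x+1}$ in the abelian group $A$ to get $a^4[a,x]^2[a,y]^2[a,x,y]$, then use $g^2\in C_G(A)$ to write $[a,g]^2=[a,g,g]^{-1}\in\gamma_3(G)\le Z(G)$ for $g=x,y$, with $[a,x,y]\in\gamma_3(G)$ central by the class-$3$ hypothesis. Your side observation is also accurate: the paper's proof likewise never uses the hypothesis $(xy)^2\in C_G(A)$.
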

\begin{proof}
We may write
\begin{align*}
a^{xy+x+y+1}=&\\ &a^4[a,xy][a,x][a,y]=\\
&a^4[a,y][a,x]^y[a,x][a,y]=\\ &a^4 [a,x]^2[a,y]^2[a,x,y].
\end{align*}
On the other hand, for an element $g\in G$ such that $g^2\in
C_G(A)$, we have
$$1=[a,g^2]=[a,g]^g[a,g]=[a,g]^2[a,g,g],$$
and so $[a,g]^2=[a,g,g]^{-1}\in Z(G)$ as $G$ is of class $3$.
Hence, it follows from $(*)$ that $a^{xy+x+y+1}=a^4 z$ for the
central element $z=[a,x]^2[a,y]^2[a,x,y]$. This completes the
proof.
\end{proof}
\begin{lem}\label{nil=2}
Let $G$ be a finite $p$-group of class at most $2$. Then for all
$x,y\in G$ and $n\in \mathbb{N}$, there exists $z\in Z(G)$ such
that  $y^{x^{n-1}+\cdots+x+1}=y^n z$.
\end{lem}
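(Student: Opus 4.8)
The plan is to exploit that in a group of class at most $2$ every commutator $[y,x]$ lies in $Z(G)$, so that conjugation by powers of $x$ moves $y$ only by a central factor. Concretely, I would first establish the auxiliary formula
$$y^{x^i}=y\,[y,x]^i \qquad (i\ge 0),$$
reading $y^{x^i}=x^{-i}yx^{i}$. For $i=1$ this is just $y^x=y[y,x]$, and the general case follows by an immediate induction: since $G'\le Z(G)$ the element $[y,x]$ is central, hence fixed under conjugation, so $y^{x^{i+1}}=(y[y,x]^i)^x=y^x[y,x]^i=y[y,x]^{i+1}$.

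Next I would interpret the expression $y^{x^{n-1}+\cdots+x+1}$ additively as the product $\prod_{i=0}^{n-1}y^{x^i}$, noting that all these conjugates lie in the abelian subgroup $\langle y\rangle Z(G)$, so the product is independent of the order of its factors and the additive exponent notation is legitimate. Substituting the auxiliary formula and using that $c:=[y,x]$ is central to pull every power of $c$ to the right, I would compute
$$y^{x^{n-1}+\cdots+x+1}=\prod_{i=0}^{n-1} y\,c^{i}=y^{n}\,c^{\,0+1+\cdots+(n-1)}=y^{n}\,[y,x]^{\binom{n}{2}}.$$
Thus the lemma holds with the explicit central element $z=[y,x]^{\binom{n}{2}}\in G'\le Z(G)$.

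There is no serious obstacle here; the entire content is the centrality of $[y,x]$, which is precisely the class-$2$ hypothesis, and (in contrast to Lemmas \ref{p-central}--\ref{p=2nil=3}) no appeal to the Hall--Petrescu formula is needed. The only points requiring a line of care are the inductive verification of $y^{x^i}=y[y,x]^i$ and the remark that the conjugates commute, so that collecting the factors $c^i$ into $c^{\binom{n}{2}}$ is valid; both are routine. If one prefers to avoid invoking commutativity of the factors explicitly, the recursion $y^{x^{n}+\cdots+1}=\bigl(y^{x^{n-1}+\cdots+1}\bigr)^{x}\,y$ yields a clean induction on $n$ with the central correction updated by $z_{n+1}=[y,x]^{\,n}z_n$, again giving $z_n=[y,x]^{\binom{n}{2}}$.
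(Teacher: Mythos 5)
Your proof is correct and is in substance the paper's own argument: the paper disposes of the lemma in one line by citing the class-$2$ identity $(xy)^n=x^ny^n[y,x]^{n(n-1)/2}$, which, via the standard expansion $(xy)^n=x^n\,y^{x^{n-1}+\cdots+x+1}$, is exactly your computation $y^{x^{n-1}+\cdots+x+1}=y^n[y,x]^{\binom{n}{2}}$ in disguise. Your version merely makes explicit what the paper leaves to the reader, namely the induction $y^{x^i}=y[y,x]^i$ and the fact that the conjugates $y^{x^i}$ commute, so the collected central element is $z=[y,x]^{\binom{n}{2}}$.
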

\begin{proof}
It is straightforward as we have the identity
$$(xy)^n=x^ny^n[y,x]^{n(n-1)/2}$$ in a nilpotent group of class at
most $2$.
\end{proof}
\begin{prop}\label{coh-tr}{\rm (Proposition 1 of \cite{S})}
Suppose that $A\not=0$ is a cohomologically trivial $Q$-module
where $A$ and $Q$ are finite $p$-groups. Then for every subgroup
$H$ of $Q$, the centralizer $C_Q(A_H)=H$.
\end{prop}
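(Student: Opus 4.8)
The plan is to establish the two inclusions of $C_Q(A_H)=H$ separately. The inclusion $H\subseteq C_Q(A_H)$ is immediate from the definition of $A_H$, since every element of $H$ fixes $A_H$ elementwise. For the reverse inclusion I would take $g\in C_Q(A_H)$ and set $K=\langle H,g\rangle$. As both $H$ and $g$ fix $A_H$ elementwise, the whole subgroup $K$ centralizes $A_H$, so $A_H\subseteq A_K$; since also $A_K\subseteq A_H$, we get $A_K=A_H$. If $g\notin H$, then $H$ is a proper subgroup of the finite $p$-group $K$, so there is a maximal (hence normal, index $p$) subgroup $M$ with $H\leq M\lneq K$, and then $A_K\subseteq A_M\subseteq A_H=A_K$ forces $A_M=A_K$. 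Thus everything reduces to showing that $A_M=A_K$ is impossible when $M$ is maximal in $K$; deriving a contradiction from this gives $g\in H$ and finishes the proof.

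To produce that contradiction I would work entirely in dimension $0$, as suggested by the Gasch\"utz--Uchida reduction recalled above. Since $A$ is cohomologically trivial over $Q$, restriction makes $A$ cohomologically trivial over every subgroup, so in particular $H^0(M,A)=A_M/A^{\tau_M}=0$ and $H^0(K,A)=A_K/A^{\tau_K}=0$; that is, $A_M=A^{\tau_M}$ and $A_K=A^{\tau_K}$. Also, because $M$ is a $p$-group acting on the nonzero finite abelian $p$-group $A$, a counting argument gives $A_M\neq 0$ (the number of fixed points is divisible by $p$). Write $B:=A_M$ and let $\bar g$ generate the cyclic group $K/M$ of order $p$ acting on $B$. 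On the one hand, $A_K\subseteq A_M=B$ consists precisely of the elements of $B$ fixed by $\bar g$, so $A_K=B_{K/M}$. On the other hand, using the coset factorization $\tau_K=\big(\sum_{i=0}^{p-1}g^i\big)\tau_M$ coming from $K=\bigsqcup_{i=0}^{p-1}g^iM$, and applying this to $A$ with $A^{\tau_M}=A_M=B$, the operator $\sum_i g^i$ acts on $B$ through the quotient (as $M$ fixes $B$), hence becomes the trace $\tau_{K/M}$ of the $K/M$-action on $B$; thus $A_K=A^{\tau_K}=B^{\tau_{K/M}}$. Comparing the two descriptions, in the bad case $A_M=A_K$ we would have $B=B_{K/M}$, i.e. $K/M$ acts trivially on $B$, so $\tau_{K/M}$ is the $p$-th power map and $B^{\tau_{K/M}}=B^{p}$. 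Then $B=A_K=B^{p}$, which is impossible for the nontrivial finite abelian $p$-group $B$, since its Frattini quotient $B/B^{p}$ is nonzero. This contradiction rules out $A_M=A_K$.

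The main obstacle is precisely this cyclic-quotient step: one must correctly pass from the cohomological vanishing $H^0(K,A)=H^0(M,A)=0$ of the module $A$ to information about the induced action of $K/M$ on the fixed submodule $A_M$. The key device is the trace factorization $\tau_K=\big(\sum_i g^i\big)\tau_M$, which lets the norm of $A$ over $K$ be read off as the norm of $K/M$ on $B=A_M$; the surrounding steps (the reduction to a maximal subgroup, the nonvanishing of fixed points, and the collapse to the $p$-th power map in the trivial-action case) are routine. It is worth emphasizing that the hypothesis $A\neq 0$ enters exactly at the point $A_M\neq 0$, without which the final contradiction $B=B^{p}$ would be vacuous.
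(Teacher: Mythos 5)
Your proof is correct. Note, however, that there is no internal proof to compare it with: the paper states this result verbatim as Proposition 1 of \cite{S} and defers its proof entirely to Schmid's paper. Your argument --- reduce to a normal subgroup $M$ of index $p$ in $K=\langle H,g\rangle$ with $A_M=A_K$, convert the vanishing $H^0(M,A)=H^0(K,A)=0$ into $A_M=A^{\tau_M}$ and $A_K=A^{\tau_K}$, factor $\tau_K$ as $\tau_M$ followed by the trace of the induced $K/M$-action on $B=A_M$, and derive the impossible equality $B=B^p$ for the nonzero finite abelian $p$-group $B$ --- is the standard norm-map argument for cohomologically trivial modules over $p$-groups, and every step checks out; in particular the key trace factorization is valid because $\sum_{x\in K}x=\bigl(\sum_{m\in M}m\bigr)\bigl(\sum_{i=0}^{p-1}g^i\bigr)$ in the group ring, using $M\lhd K$. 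Two points you leave implicit are harmless but worth stating: $g\notin M$ (otherwise $K=\langle H,g\rangle\leq M$, contradicting $M\lneq K$), which underwrites the coset decomposition $K=\bigsqcup_{i=0}^{p-1}g^iM$; and the fact that $K$ genuinely acts on $A_M$ with $M$ acting trivially, so that the action factors through $K/M$ --- again a consequence of normality. A marginal streamlining, closer to how such arguments are usually written: since normalizers grow in finite $p$-groups, once $H<C_Q(A_H)$ one can choose $K\leq C_Q(A_H)$ with $H\lhd K$ and $[K:H]=p$ directly, and run the same trace argument with $H$ itself in the role of $M$, avoiding the detour through a maximal subgroup of $\langle H,g\rangle$; this changes nothing essential.
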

\begin{thm}\label{p-coh}
Let $G$ be a finite $p$-group and $N$ be a non-trivial normal
subgroup of $G$ such that $G/N$ is not cyclic. Suppose that one
of the following holds:
 \begin{enumerate} \item
$p>2$ and $G/Z(G)$ is either nilpotent of class at most $2$ or
$p$-central;  \item  $G$ is nilpotent of class at most $2$.
\end{enumerate} Then, in any case we have
$H^n(\frac{G}{N},Z(N))\not=0$ for all $n\geq 0$.
\end{thm}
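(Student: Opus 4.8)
\emph{Reformulation.} Write $Q=G/N$ and $A=Z(N)$, so that $Q$ is a non-cyclic $p$-group acting on the abelian $p$-group $A$. By the Gasch\"utz--Uchida theorem recalled in Section 1, the assertion $H^n(Q,A)\neq 0$ for all $n\geq 0$ is equivalent to $A$ being \emph{not} cohomologically trivial as a $Q$-module. Since the restriction of a cohomologically trivial module to any subgroup is again cohomologically trivial, it suffices to exhibit one subgroup $S\leq Q$ and one degree in which $H^\ast(S,A)\neq 0$; by dimension shifting I will work in degree $0$, where the goal is the strict inclusion $A^{\tau_S}\subsetneq A_S$ (image of the trace properly inside the fixed points). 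First note that we may assume $C_G(A)=N$: otherwise $C_Q(A)=C_G(A)/N\neq 1$, and since the trivial subgroup fixes all of $A$, Proposition \ref{coh-tr} applied with $H=1$ already shows that $A$ is not cohomologically trivial.

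\emph{Choice of $S$ and the trace.} For odd $p$ I take $S=\langle gN\rangle\cong C_p$ with $g^p\in N$; such a $g$ exists since $Q$ is a nontrivial $p$-group. The inclusion $g^p\in N\leq C_G(A)$ is exactly the hypothesis of Lemmas \ref{p-central}, \ref{nil=3} and \ref{nil=2}, which then compute the trace of $S$ on $A$ as
$$a^{\tau_S}=a^{\,g^{p-1}+\cdots+g+1}=a^p z,\qquad z\in Z(G).$$
The appropriate lemma is dictated by the hypothesis: Lemma \ref{nil=2} when $G$ has class $\leq 2$, Lemma \ref{nil=3} when $G/Z(G)$ has class $\leq 2$ (so that $G$ has class $\leq 3$), and Lemma \ref{p-central} when $G/Z(G)$ is $p$-central. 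For $p=2$ (case (2)) I take instead a Klein four subgroup $S=\langle xN,yN\rangle$ with $x^2,y^2,(xy)^2\in N$, and Lemma \ref{p=2nil=3} gives $a^{\tau_S}=a^{\,xy+y+x+1}=a^4 z$ with $z\in Z(G)$.

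\emph{Reducing to a commutator statement.} In the cyclic case the map $a\mapsto[a,g]$ is a homomorphism of $A$ with kernel $C_A(g)$ and image $[A,g]$, so $|[A,g]|=|A:C_A(g)|$; moreover $[A,g]\subseteq\ker\tau_S$, since $[a,g]^{\tau_S}=a^{(g-1)(g^{p-1}+\cdots+1)}=1$. Comparing orders, $H^0(S,A)=A_S/A^{\tau_S}$ vanishes precisely when $\ker\tau_S=[A,g]$. On the other hand every fixed point of order $p$ lies in $\ker\tau_S$, because $a^g=a$ forces $a^{\tau_S}=a^p$, which is trivial when $a$ has order $p$. Hence it suffices to arrange $\Omega_1(C_A(g))\not\subseteq[A,g]$. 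A natural test element is a nontrivial $c\in\Omega_1(Z(G)\cap N)$, which is nonzero since $N\neq 1$ is normal in the $p$-group $G$; it lies in $A_S$ and has order $p$, and I want to choose $S$ so that $c\notin[A,g]$, i.e. so that $c$ is not a commutator $[a,g]$ with $a\in Z(N)$.

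\emph{Main obstacle.} The crux is exactly this last point: showing that, because $Q$ is \emph{non-cyclic}, the subgroup $S$ can be chosen so that some order-$p$ fixed central element escapes $[A,g]$ (equivalently $H^{-1}(S,A)\neq 0$). This is where the nilpotency hypotheses enter substantively, through the near-bilinearity of the commutator map in class $\leq 3$, and where one must rule out the degenerate possibility that $c\in[A,g]$ for every admissible $g$. The case $p=2$ is the most delicate for two reasons: first, $S\cong C_2\times C_2$ is not cyclic, so the clean order-count above must be replaced by a direct comparison of $|A_S|$ with $|A^{\tau_S}|$ using the $a^4z$ formula; and second, a non-cyclic $2$-group need not contain a Klein four subgroup (for instance a generalized quaternion quotient has a unique involution), so that configuration requires a separate, elementary treatment. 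I expect the verification that non-cyclicity forces the escaping fixed point, uniformly across the three nilpotency hypotheses, to be the main technical burden of the proof.
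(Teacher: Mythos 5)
Your setup is sound and reproduces the first half of the paper's argument — the Gasch\"utz--Uchida reduction to a single subgroup, working in dimension $0$, and the computation of the trace as $a\mapsto a^pz_a$ with $z_a\in Z(G)$ via Lemmas \ref{p-central}, \ref{nil=3}, \ref{nil=2} — but your proof stops exactly where the theorem actually gets proved, and you acknowledge this yourself. The step you defer (``arrange $\Omega_1(C_A(g))\not\subseteq [A,g]$ for a well-chosen $g$'') is not a technical verification to be filled in; it is the whole difficulty, and in fact the paper never proves anything of this kind. The paper's decisive move is to run Proposition \ref{coh-tr} over \emph{all} subgroups of order $p$ simultaneously rather than hunting for one good subgroup: if $A$ were cohomologically trivial, then for each subgroup $H/N$ of order $p$ one has $A_{H/N}=A^{\tau_{H/N}}=\{a^pz_a \;|\; a\in A\}$, and — this is where the centrality of $z_a$ in $G$ is actually exploited — $C_{G/N}(A^{\tau_{H/N}})=C_{G/N}(A^p)$, a subgroup that does not depend on $H/N$ at all. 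Proposition \ref{coh-tr} identifies this centralizer with $H/N$ itself, so all subgroups of order $p$ of $G/N$ coincide, forcing $G/N$ to be cyclic or generalized quaternion. For odd $p$ (case (1)) this is already the desired contradiction. In case (2), class at most $2$ forces $G/N\cong Q_8$, and repeating the same centralizer argument on cyclic subgroups of order $4$ (Lemma \ref{nil=2} with $n=4$) shows $G/N$ would have a unique such subgroup, whereas $Q_8$ has three.

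So the gap is concrete: you reduce to exhibiting a fixed point of order $p$ escaping $[A,g]$, but you have no mechanism for producing it, and none of the trace lemmas supplies one — they compute $A^{\tau_S}$, not the relative position of $\Omega_1(C_A(g))$ and $[A,g]$, which can be genuinely hard to control when $G$ has class $3$. The paper's argument avoids ever producing such an element: nontriviality of cohomology is obtained indirectly, from the structural impossibility of every order-$p$ subgroup of a non-cyclic group equaling the fixed subgroup $C_{G/N}(A^p)$. Note also that your worry about Klein four subgroups when $p=2$ dissolves on the paper's route — the surviving case is precisely the generalized quaternion one, and it is killed by order-$4$ cyclic subgroups, so Lemma \ref{p=2nil=3} is not even needed in the proof of Theorem \ref{p-coh}. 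To complete your proof as written you would essentially be forced to rediscover this centralizer argument, so I would restructure around Proposition \ref{coh-tr} applied to the full family of minimal subgroups from the start.
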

\begin{proof}
The proof follows from the lines of the proof of the Theorem in
\cite{S} but instead of using \cite[Proposition 2]{S} in the
proof, one may use Lemmas \ref{p-central}, \ref{nil=3},
\ref{nil=2}. We give the proof for the reader's convenience.\\

Suppose, for a contradiction, that $H^n(G/N,Z(N))=0$ for some
$n\geq 0$. Let $H/N$ be a subgroup of $G/N$ of order $p$. By
Gasch\"utz and Uchida's result, we have $H^0(H/N,A)=0$ where $A=Z(N)$.
Thus $A_{\frac{H}{N}}=A^{\tau_{\frac{H}{N}}}$. Now it follows from
Lemmas \ref{p-central}, \ref{nil=3} or \ref{nil=2} that there
exist elements  $z_a\in Z(G)$ ($a\in A$)  such that
$A^{\tau_{\frac{H}{N}}}=\{a^pz_a \;|\; a\in A\}$. Now since
$z_a\in Z(G)$, we have that
$$C_{\frac{G}{N}}(A^{\tau_{\frac{H}{N}}})=C_{\frac{G}{N}}(A^p).$$
Thus by Proposition \ref{coh-tr} we have that
$\frac{H}{N}=C_{\frac{G}{N}}(A^p)$. As the right hand side of the
latter equality is independent from the choice of $H/N$, we have
that $G/N$ has a unique subgroup of order $p$. Therefore $G/N$
is  cyclic  or  generalized quaternion and so in the case (1), we
are done. Thus we are left with the case (2) and we may assume
further that $G/N$ is a generalized quaternion group. In this case
as $G$ is nilpotent of class at most $2$, we have that $G/N$ is
the  quaternion group of order $8$. Now it follows from
Lemma \ref{nil=2} and a similar argument as above  that $G/N$ has
only one cyclic subgroup of order $4$, a contradiction. This
completes the proof.
\end{proof}
Let us finish by an alternative proof of Theorem \ref{power1} for the case $p>2$ in which we use Theorem \ref{p-coh}. Before that, we need the following lemma, however it has its own interest, one can see the extra amounts of works in respect to the more quick and straightforward proof of  the first part of Theorem \ref{power1}.
\begin{lem}\label{Z^1}
Let $p>2$ and $G$ be a finite  $p$-group such that $G/Z(G)$ is
$p$-central. Then $Z^1(\frac{G}{\Phi(G)},Z(\Phi(G)))$ is an
elementary abelian $p$-group.
\end{lem}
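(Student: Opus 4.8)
The plan is to pass through the natural description of crossed homomorphisms and reduce the statement to a pointwise order bound. Write $Q=G/\Phi(G)$ and $A=Z(\Phi(G))$. Since $\Phi(G)=G'G^p$, the quotient $Q$ is elementary abelian, so $q^p=1$ for every $q\in Q$; moreover $A$ is a normal abelian subgroup of $G$ and $\Phi(G)\le C_G(A)$ because $A$ is central in $\Phi(G)$, so the action of $Q$ on $A$ given by $a^{g\Phi(G)}=a^g$ is well defined. The group $Z^1(Q,A)$ (with pointwise multiplication) is a finite abelian $p$-group, and it is elementary abelian precisely when $f(q)^p=1$ for every $f\in Z^1(Q,A)$ and every $q\in Q$. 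So I would fix such $f$ and $q$, pick a representative $g\in G$ of $q$, and set $a=f(q)\in A$. The cocycle identity $f(xy)=f(x)^y f(y)$ gives, by a short induction, $f(q^n)=a^{g^{n-1}+\cdots+g+1}$; taking $n=p$ and using $q^p=1$ (so $f(q^p)=f(1)=1$) yields the key relation
$$a^{g^{p-1}+\cdots+g+1}=1.$$

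With this relation in hand the idea is to compare the ``norm'' exponent $g^{p-1}+\cdots+g+1$ with the $p$-th power map, which is exactly what Lemma~\ref{p-central} provides: here $g^p\in\Phi(G)\le C_G(A)$, and $G/Z(G)$ is $p$-central, so the hypotheses of that lemma hold with $A$, $a$, $g$. If $p>3$ the lemma gives $a^{g^{p-1}+\cdots+g+1}=a^p$, which combined with the displayed relation immediately forces $a^p=f(q)^p=1$, finishing this case. If $p=3$ the lemma instead gives $a^{g^2+g+1}=a^3z$ with $z\in Z(G)$, so the displayed relation only yields $a^3=z^{-1}\in Z(G)$; thus $f(q)^3$ is merely central, and some extra work is needed.

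The main obstacle is therefore the case $p=3$, where I would exploit $3$-centrality of $G/Z(G)$ once more. Since $a^3\in Z(G)$, the image $aZ(G)$ has order dividing $3$ in $G/Z(G)$, and $3$-centrality forces $aZ(G)\in Z(G/Z(G))=Z_2(G)/Z(G)$, that is, $a\in Z_2(G)$. Now $c:=[a,g]\in Z(G)$, so $a^{g^k}=ac^k$ for all $k$; evaluating $[a,g^3]=a^{-1}a^{g^3}=c^3$ and using $g^3\in C_G(A)$ (so $[a,g^3]=1$) gives $c^3=1$. Recomputing the norm directly then yields
$$a^{g^2+g+1}=a\cdot a^g\cdot a^{g^2}=a^3c^3=a^3,$$
and comparing with the displayed relation gives $a^3=f(q)^3=1$. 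In every case $f(q)^p=1$, so $Z^1(G/\Phi(G),Z(\Phi(G)))$ has exponent dividing $p$ and is elementary abelian.
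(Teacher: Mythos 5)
Your proof is correct, and it takes a genuinely different route from the paper's. Both arguments start identically: the cocycle identity together with $\bar{x}^p=1$ in $G/\Phi(G)$ gives the key relation $a^{g^{p-1}+\cdots+g+1}=1$ for $a=f(\bar{x})$. From there the paper does \emph{not} pass through Lemma \ref{p-central}: it rewrites the relation as $(xa)^p=x^p$ and applies the Longobardi--Maj theorem \cite[Theorem 5]{LoM} to the $p$-central group $G/Z(G)$ to conclude $[xa,x]=[a,x]\in Z(G)$; then the class-two identity $(xa)^p=x^pa^p[a,x]^{p(p-1)/2}$, combined with $[a,x]^p=[(xa)^p,x]=[x^p,x]=1$ and $p$ odd, collapses the relation directly to $a^p=1$, uniformly for all odd $p$ with no case distinction. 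You instead quote Lemma \ref{p-central} (which the paper proves for use in Theorem \ref{p-coh}) as a black box; this settles $p>3$ instantly but leaves the genuine difficulty at $p=3$, where that lemma only yields $a^3\in Z(G)$. Your patch there is correct and self-contained: $3$-centrality of $G/Z(G)$ promotes $a$ into $Z_2(G)$, whence $c=[a,g]\in Z(G)$, the relation $[a,g^3]=1$ forces $c^3=1$, and the norm collapses to $a^3$. What the paper's argument buys is brevity and uniformity in $p$; what yours buys is economy of means --- the lemma becomes essentially a corollary of machinery already established for the cohomological theorem --- at the cost of an extra (but clean) argument exactly where that machinery is weakest, namely $p=3$, where your appeal to $p$-centrality of $G/Z(G)$ replaces the paper's second use of the Longobardi--Maj theorem.
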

\begin{proof}
Let $f\in Z^1(\frac{G}{\Phi(G)},Z(\Phi(G)))$ and
$\bar{x}=x\Phi(G)$ for $x\in G$. We  have to prove that
$\bar{x}^{f^p}=1$. We have $a=\bar{x}^f\in Z(\Phi(G))$. Since
$x^p\in \Phi(G)$, $(\bar{x}^p)^f=1$ and so
$a^{x^{p-1}+\dots+x+1}=1$ which is equivalent to the equality
$(xa)^p=x^p$. Thus $[xa,x]\in Z(G)$ by \cite[Theorem 5]{LoM}.
Hence $[x,a]\in Z(G)$ and so $(xa)^p=x^pa^p[a,x]^{p(p-1)/2}$. On
the other hand $[(xa)^p,x]=[x^p,x]=1$ and as $[xa,x]\in Z(G)$, it
follows that $[xa,x]^p=1$. Thus $(xa)^p=x^pa^p$ and so
$a^{x^{p-1}+\dots+x+1}=a^p=1$. This completes the proof.
\end{proof}

\noindent{\bf Second Proof of part 1 of Theorem \ref{power1} for odd $p$.} Suppose, for a contradiction, that $G$ has no noninner automorphism
of order $p$ leaving  $\Phi(G)$. By an easy argument given in the first part of  of the proof of Theorem \ref{power1}, we have    that $G/Z(G)$ is
$p$-central. By \cite{DS} we may further assume that
$\Phi(G)=C_G(Z(\Phi(G)))$ which implies
$Z(\Phi(G))=C_G(\Phi(G))$. Now taking $N=\Phi(G)$ in Theorem
\ref{p-coh}, we find that
$H^1(\frac{G}{\Phi(G)},Z(\Phi(G)))\not=0$. By Lemma \ref{Z^1},
$Z^1\big(\frac{G}{\Phi(G)},Z(\Phi(G))\big)$ is an elementary abelian
$p$-group  and so it follows from \cite[Result 1.1 and Corollary
1.2]{S1} that $G$ has a noninner automorphism of order $p$
leaving  $\Phi(G)$ elementwise fixed, a
contradiction. This completes the proof. $\hfill\Box$\\

\noindent{\bf Acknowledgements.} The author is  grateful to the referees for their valuable comments and careful readings. The  ``cohomology free'' proof of the first part of Theorem \ref{power1} is due to one of the referees; who I am  indebted to him/her, as I  was inspired and encouraged   by his/her argument  to solve the second part of Theorem \ref{power1} for $p=2$. Some parts of this work was done during author's visit from University of Bath in Summer 2009. The author thanks the Department of Mathematical Sciences of University of Bath for its hospitality and especially he wishes  to thank Gunnar Traustason.   This research was partially supported by  the Center of Excellence for Mathematics, University of Isfahan and the author gratefully acknowledges the financial support of University of Isfahan for the sabbatical leave study in University of Bath.


\end{document}